\newtheorem{thm}{Theorem}
\newtheorem{prop}[thm]{Proposition}
\newtheorem{lem}[thm]{Lemma}
\newtheorem{cor}[thm]{Corollary}
\newtheorem{hyp}{Assumption}
\newcommand{\po}{\left(}
\newcommand{\pf}{\right)}
\newcommand{\co}{\left[}
\newcommand{\cf}{\right]}
\newcommand{\R}{\mathbb R}
\newcommand{\E}{\mathbb E}
\newcommand{\D}{\mathcal D}
\newcommand{\dd}{\text{d}}
\newcommand{\na}{\nabla}
\newcommand{\Lv}{L_\varepsilon^*}
\newcommand{\Cinf}{\mathcal C^\infty_+(\R^{2d})}
\title{Hypocoercivity in metastable settings and kinetic simulated annealing.}
\author{Pierre Monmarch\'e\footnote{\emph{Affiliation:} Universit\'e Paris-Est, CERMICS (ENPC), INRIA, 	8 Avenue Blaise Pascal, Cité Descartes,
77455 Marne-la-Vall\'ee, France; \emph{contact:} pierre.monmarche@ens-cachan.org, 06 66 80 50 36 \\
\emph{Keywords:} hypocoercivity, simulated annealing, kinetic Langevin diffusion, metastability\\
\emph{MSC code:} 60J25, 46N30}}
\date{July 28, 2017\footnote{first version released in March 2015.}}
\begin{document}

\maketitle

\begin{abstract}
Combining classical arguments for the analysis of the simulated annealing algorithm with the more recent hypocoercive method of distorted entropy,  we prove the convergence for large time of the kinetic Langevin annealing with logarithmic cooling schedule.
\end{abstract}
\section{Main result}

Consider the kinetic Langevin diffusion on $\R^{2d}$, which is the solution of the stochastic differential equation
\begin{eqnarray}\label{EqLangevinMasse}
& & \left\{\begin{array}{rcl}
\dd X_t & = & Y_t \dd t\\
& & \\
m\dd Y_t & = & -\na_x U(X_t)\dd t - \nu Y_t \dd t + \sqrt{2 T} \dd B_t,
\end{array}\right.
\end{eqnarray}
where $U$ is a smooth confining potential on $\R^d$ (\emph{confining} meaning that it goes to $+\infty$ at infinity), $m$ a mass, $\nu$ a friction coefficient, $T$ a temperature and $(B_t)_{t\geq 0}$ a standard Brownian motion on $\R^d$. It is ergodic so that,  for $t$ large enough, the law $\mathcal L(X_t,Y_t)$ approximates its equilibrium, which is the Gibbs law with density proportional to $\exp\po -\frac1T\po U(x) + \frac{|y|^2}{2}\pf\pf$ if $m=\nu=1$. At low temperature (namely when $T$ goes to 0) the mass of the Gibbs law concentrates on any neighbourhood of the global minima of $U$. The principle of the annealing procedure is that, if $T$ decays slowly enough with time so that $\mathcal L(X_t,Y_t)$ is still  a good approximation of the Gibbs law in large time, then the process should  reach the global minima of $U$.

This mechanism has been abundantly studied for another process, the stochastic gradient descent
\begin{eqnarray}\label{EqGradientSto}
\dd Z_t & = & - \na U(Z_t) \dd t + \sqrt{2 T_t} \dd B_t,
\end{eqnarray}
which may be obtained from \eqref{EqLangevinMasse} when the mass vanishes or, up to a proper time rescaling, when the friction coefficient $\nu$ goes to infinity (and thus it is also called the overdamped Langevin process). In particular it is known (see \cite{CHSrecuit,Holley} for instance) that there exists a constant $E_*$, depending on $U$ and called the critical depth of the potential, such that, considering a vanishing and positive cooling schedule $(T_t)_{t\geq 0}$, the following holds:
\begin{itemize}
\item if $T_t \geq \frac{E}{\ln t}$ for $t$ large enough with $E>E_*$ then, for all $\delta >0$,
\begin{eqnarray*}
\mathbb P\po U(Z_t) \leq \min U + \delta \pf & \underset{t\rightarrow\infty}\longrightarrow & 1.
\end{eqnarray*}
\item if $T_t \leq \frac{E}{\ln t}$ for $t$ large enough with $E<E_*$ then, for $\delta$ small enough,
\begin{eqnarray*}
\underset{t\rightarrow\infty}\limsup\ \mathbb P\po U(Z_t) \leq \min U + \delta \pf & < & 1.
\end{eqnarray*}
\end{itemize}

The reason for which $Z$ has been more studied than $(X,Y)$ is that it is a reversible process whose \emph{carr\'e du champ} operator is $\Gamma f = |\na f |^2$, which relates its convergence to equilibrium to some functional inequalities satisfied by the Gibbs law (see \cite{LogSob} or Section \ref{SectionGamma} for definitions and more precise statements). On the contrary, $(X,Y)$ is not reversible and $\Gamma f = |\na_y f |^2$ is not elliptic (it lacks some coercivity in the $x$ variable), which is due to the fact the randomness only appears in $\dd Y$ and thus only indirectly intervenes in the evolution of $X$. In other words, $Z$ has been more studied than $(X,Y)$ because,  from a theoretical point of view, it is simpler.

However, from a practical point of view, a process with inertia can be expected to explore the space more efficiently than a reversible one. Indeed, the velocity variable $Y$ acts as an instantaneous memory, which prevents the process to instantaneously go back to the place it just came from. Moreover, the deterministic Hamiltonian dynamics $x''(t) = - \nabla U(x(t)) - x'(t) $ is able to leave the catchment area of a local minimum of $U$, provided it starts with an energy $U(x) + \frac{|x'|^2}{2}$ large enough. This is not the case of the gradient descent $x'(t) = - \nabla U(x(t)) $.

This heuristic, according to which kinetic processes should converge more rapidly than reversible ones, has been proved for some toy models (for instance the Langevin process with a quadratic potential in  \cite{Gadat2013,MonmarcheGuillin}). On the other hand, it has been numerically observed (in \cite{Lelievre2006}) that $(X,Y)$ is, indeed, more efficient than $Z$ (or than the Metropolis-Hastings mutation/selection procedure) in order to sample the Gibbs law at a given temperature for practical potentials. But, to our knowledge, a theoretical proof of the convergence of a simulated annealing algorithm based on the Langevin dynamics was still missing.

\bigskip %?

According to the classical analysis of the simulated annealing (developed in the early nineties), the convergence of the algorithm is related to the speed of convergence, at fixed temperature, of the process toward its equilibrium. On the other hand, this question of ergodicity has been intensively investigated over the past fifteen years for degenerated processes such as the Langevin one, which are called hypocoercive.

Bringing together classical arguments (mainly the work of Holley and Stroock \cite{Holley1} and Miclo \cite{Miclo92}) and more recent ideas from  studies of hypocoercivity (mostly the work of Talay \cite{Talay} and Villani \cite{Villani2009}), we will study the convergence of the inhomogeneous Markov process which solves
\begin{eqnarray}
& & \left\{\begin{array}{rcl}\label{EqSDELangevin}
\dd X_t & = & Y_t \dd t\\
& & \\
\dd Y_t & = & -\frac{\theta(\varepsilon_t)}{\varepsilon_t}\na_x U(X_t)\dd t - \frac1{\theta(\varepsilon_t)}Y_t \dd t + \sqrt{2} \dd B_t,
\end{array}\right.
\end{eqnarray}
with positive $\varepsilon_t$ and $\theta$. This is a scaled version of \eqref{EqLangevinMasse} where these two parameters have been chosen so that,  $\varepsilon_t = \varepsilon$ being fixed, the invariant law of the process would be the Gibbs measure associated to the Hamiltonian $ \frac1\varepsilon U(x) + \frac{|y|^2}{2\theta\po \varepsilon\pf}$. Hence, we call $\varepsilon$ the temperature and $(\varepsilon_t)_{t\geq 0}$ the cooling schedule, despite the fact that $\varepsilon$ does not correspond to the physical temperature when the process is interpreted as the position and speed of a particle subjected to potential, friction and thermal forces. Similarly, $\theta$ is the variance of the velocity at equilibrium for a fixed temperature, and we simply call it the variance.

\bigskip

 More precisely, we will work under the following set of hypotheses:

\begin{hyp}\label{Hypo}\

\begin{enumerate}[i] 
\item\label{HypoPotentiel} The potential $U$ is smooth with all its derivatives growing at most polynomially at infinity. It has a finite number of critical points, all of then being non-degenerated (i.e. $U$ is a so-called Morse function), and at least one non-global minimum. Furthermore ,
\[\| \na_x^2 U\|_\infty^2 := \underset{x\in\R^d}\sup\overset{d}{\underset{i,j=1}\sum} \po \partial_{x_i}\partial_{x_j} U(x)\pf^2< \infty\]
and $U$ is quadratic at infinity, in the sense that there exist $a_1,a_2,M,r>0$ such that, for all $x\in\R^d$,
\begin{eqnarray*}
a_1 |x|^2 - M \hspace{20pt}\leq &\ U(x)\ & \leq \hspace{20pt} a_2|x|^2 + M,\\
& & \\
- \na_x U(x).x & \leq & - r|x|^2 +M.
\end{eqnarray*}
\item The cooling schedule is positive, non-increasing, and vanishes at infinity. Moreover, for $t$ large enough, $\partial_t \po \frac{1}{\varepsilon_t}\pf \leqslant \frac{1}{Et} $ where $E>E_*$. In particular, $\varepsilon_t \geqslant \frac{E}{A+ \ln t}$ for some $A>0$, and $|\varepsilon_t'| \leqslant \frac{\varepsilon_0^2}{Et}$.
\item The function $\theta$ is smooth and positive, and $\theta(\varepsilon) \geqslant l\varepsilon$ for some $l>0$. Furthermore, both $\theta$ and $\partial_\varepsilon \theta$ are sub-exponential, where we say a function $\varepsilon \mapsto w(\varepsilon)$ is sub-exponential if $\varepsilon \ln w(\varepsilon)$ goes to 0 as $\varepsilon$ goes to 0.
 
\item\label{HypoLoiInit} The initial law $m_0=\mathcal L(X_0,Y_0)$  admits a $\mathcal C^\infty$ density (still denoted by $m_0$) with respect to the Lebesgue measure. Moreover, the Fisher information $\int \frac{|\na m_0|^2}{m_0} \dd x \dd y$ and the moments $\mathbb E\po |X_0|^p + |Y_0|^p\pf$, $p\geq 0$, are all finite. 
\end{enumerate}

\end{hyp} 

The aim of this work is to prove the following:

\begin{thm}\label{TheoPrincipal}
Under Assumption \ref{Hypo}, if $(X,Y)$ solves \eqref{EqSDELangevin} then, for any $\delta >0$,
\begin{eqnarray*}
\mathbb P\po U(X_t) \leqslant \min U + \delta \pf & \underset{t\rightarrow\infty}\longrightarrow & 1.
\end{eqnarray*}
If, moreover, $\partial_t \po \frac{1}{\varepsilon_t}\pf = \frac{1}{Et} $ for $t$ large enough, then for all $\delta ,\alpha>0$, there exists $A>0$ such that
 \begin{eqnarray*}
\mathbb P\po U(X_t)  \geqslant \min U +    \delta \pf & \leq & A \po\frac1 t\pf^{\frac{\min\po\delta,\frac{E - E_*}{2}\pf}{E}-\alpha}.
\end{eqnarray*}
\end{thm}

\subsection{Organization of the paper}

Some remarks about Theorem \ref{TheoPrincipal} are gathered in Section \ref{SubSectRemarque}, and some numerical examples are provided in Section \ref{SubSectionNumerique}. In Section \ref{SectionSketch}, we give a sketch of the proof of Theorem \ref{TheoPrincipal}, in order to highlight the whole strategy and to precise the technical points which have to be addressed. In particular, it appears that the main question is to study the evolution with time of a so-called distorted entropy. Section \ref{SectionPreliminary} gathers different preliminary considerations, such as the study of the Gibbs measure at small temperature, the existence and smoothness of densities and moment estimates for the kinetic Langevin process. Section \ref{SectionGamma} is devoted to a presentation, in some abstract settings, of the Gamma calculus which is, among other things, a convenient way to compute the evolution of entropy-like terms along a Markov semi-group. The rigorous study of the evolution of the distorted entropy is carried out in Section~\ref{SectionDissipation}, and the proof of Theorem \ref{TheoPrincipal} is concluded in Section \ref{SectionConclusion}.

\subsection{Remarks on Theorem \ref{TheoPrincipal}}\label{SubSectRemarque}

\begin{itemize}
\item The assumption that $U$ is quadratic at infinity  may be seen as an unnecessarily strong requirement, but then the hypocoercive computations are simpler. Anyway, we are mostly concerned with the behaviour of the process in a compact set which contains all the local minima of $U$, since it is the place of the metastable behaviour of the process and thus, of its slow convergence to equilibrium (we refer to \cite{Zitt2008} for some considerations on the growth at infinity of the potential in an annealing framework).
\item The fact that there are two control parameters, $\varepsilon$ and $\theta$, makes the framework slightly more general than the so-called semiclassical studies (cf. \cite{Robbe2014} and references within). These spectral studies furnish precise asymptotics at low (and fixed) temperature of the rate of convergence to equilibrium. However, we will only need very rough estimates since, due to the metastable behaviour of the process, only an exponential large deviation scaling is relevant, and it is given by a log-Sobolev inequality satisfied by the Gibbs law: in other words, it comes from an information on $U$ alone, independent from the Markov dynamics.
\item There are other natural kinetic candidates for the algorithm. We have in mind the run-and-tumble process (see \cite{MonmarcheRTP}, in which the convergence of the annealing procedure is studied), the linear Boltzmann equation (see \cite{Robbe2014} and references within) or the gradient descent with memory (see \cite{GadatPanloup2013}). The reasons for which we considered in a first instance the Langevin dynamics are twofold: first, each of the hereabove processes presents additional difficulties. Both the run-and-tumble process and the Boltzmann one are not diffusions, but piecewise deterministic processes with a random jump mechanism, so that their \emph{carr\'e du champ} is a non-local quadratic operator, satisfying no chain rule. This makes less convenient some forthcoming manipulations on entropies and Fisher informations. As far as the gradient descent with memory is concerned, its invariant measure is not explicit. The second reason to start with the Langevin dynamics is that it has been abundantly studied, so that many results are already available.
\item Our result holds in particular if $\theta(\varepsilon_t) = \varepsilon_t = \frac{E}{\ln(t)}$ with $E>E_*$ for $t>2$. Of course, the critical depth $E_*$ is unknown in practice and, moreover, in a real implementation, the algorithm is only run up to a finite time. In this context, logarithmic cooling schedules are inefficient (see \cite{Catoni} on this topic).
\item Theorem \ref{TheoPrincipal} only yields a sufficient condition for the algorithm to converge, and not a necessary one. In fact, we can't expect any reasonable Markov process whose equilibrium is the Gibbs measure  and with a continuous trajectory (or at least small increments\footnote{Allowing large steps is not a reasonable solution, since in applied problems the dimension is large and the reasonable configurations (i.e. the points where $U$ is not too large) lie in a very small area in view of the Lebesgue measure. A uniformly-generated jump proposal will always be absurd, and rejected.}, such as Gaussian steps for a Metropolis-Hastings algorithm) to allow cooling schedules at a faster order of magnitude than $\frac{E_*}{\ln t}$ (of course, it can be done by an artificial dilatation of the time scale, but this makes no sense in practice). Indeed, heuristically, such a process would take a time of order $\mathcal O(1)$ (at least at an exponential scale) to follow a reaction path, namely to go from a ball around a local minimum to a ball around another minimum without falling back to the first ball. Since, by ergodicity, the ratio between the mean time spent in the reaction path and the mean time spent in the small balls should be of the same order as the ratio between their probability density with respect to the Gibbs law, the mean time between two crossings from one ball to another should be of order $\mathcal O(e^{\frac{E}{\varepsilon}})$ where $E$ is the energy barrier to overcome along the reaction path (this is the so-called Arrhenius law). While the process stays in the catchment area of a local minimum, it   makes successive attempts to escape which, from mixing properties due to the lack of (long-term) memory of the dynamics, are more or less independent one from the other. The time between two decorrelated attempts would be somehow of order $\mathcal O(1)$, which means the probability for each escape attempt to succeed should be of order $\mathcal O(e^{-\frac{E}{\varepsilon}})$, or at least its logarithm should be equivalent to $-\varepsilon^{-1} E$ (this is a large deviation scaling). If $\varepsilon_k$, the temperature at the $k^{\text{th}}$ attempt, is of order $\frac{c}{\ln k}$, then the logarithm of the probability $p_k$ for the $k^{\text{th}}$ attempt to succeed should be of order $-\frac{E}{c}\ln k$, so that $\sum p_k < \infty$ if $c<E$ and $\sum p_k = \infty$ if $c>E$. According to the Borel-Cantelli Theorem, it means the process will almost surely leave the local minimum at some point if $c>E$ (slow cooling), or on the contrary can stay trapped forever with a non-zero probability if $c<E$ (fast cooling). Having a non-zero probability to get trapped forever in the cusp of any local minimum of depth at least $E$ (where the depth of a local minimum $x_0$ is the smallest energy barrier the process has to overcome, starting from $x_0$, in order to reach another minimum $x_1$ with $U(x_1) < U(x_0)$, and the cusp of $x_0$ is the set of points that the process can reach from $x_0$ while staying at an energy level lower than $U(x_0) + E$), it will almost surely end up trapped in one of this cusp. Since $E_*$ is by definition the largest depth among all non-global minima of $U$, if $E>E_*$ then, necessarily, when the process is trapped, it is in the cusp of a global minimum; while if $E<E_*$, it may be in the cusp of a non-global minimum with positive probability, which means the algorithm may fail.

\item Theorem \ref{TheoPrincipal} does not provide any efficiency comparison between the kinetic annealing and the reversible one. That being said, from the previous remark, such a comparison cannot be expected at this level (low temperature and infinite time asymptotic; convergence in probability to any neighbourhood of global minima) for different Markov processes. In fact in practice non-Markov strategies are developed, using memory such as the Wang-Landau or adaptive biasing force (ABF) algorithms (see \cite{LelievreABF,LelievreWL} and references within), or interactions (see \cite{GarciaSun2014}). These dynamics may be Markovian in an augmented space, but the local exploring particle alone is not (and the invariant measure of the augmented Markov process is not the Gibbs measure), so that it may not be limited to a speed $\frac{E_*}{\ln t}$. Since the study \cite{LelievreABF} of the ABF algorithm %by Leli\`evre, Rousset and Stoltz
relies on an entropy method, we can hope the present method to extend to this non-Markovian case.% (at least at low temperature). 
\item The result does not give any indication of what a good choice of $\theta$ could be. It is not even clear whether, as $\varepsilon$ goes to zero, it should go to zero (this is reasonable), to infinity or to a finite positive value. A large $\theta$ allows high velocities, which means a stronger inertia and shorter exit times from local cusps, but may lead in high dimension to the same problem as uniform random large jumps, namely a blind tend to visit absurd configurations, and oscillations between high levels of potential,  hardly affected by too short straight-line crossings of the compact set where all the minima are located. This is reminiscent to the fact that too much memory yields instability for the gradient descent \cite{GadatPanloup2013}.

Since the first submission of the present article, this question has been addressed in the quadratic case in \cite{MonmarcheGuillin}, in which it is proven that, at a fixed $\varepsilon$, the speed of convergence is is optimal for a given intermediary variance $\theta$.
\item Despite the above temperate remarks on its practical interest, we repeat and emphasize that even a theoretical result such as Theorem \ref{TheoPrincipal} was yet to be rigorously established.
\end{itemize}

\subsection{Numerical illustrations}\label{SubSectionNumerique}

A real numerical study, with meaningful elements of comparison between the kinetic and overdamped dynamics, or different choices for the cooling schedule and the variance, on a real optimization problem, would require another paper on its own, as this was done in \cite{Lelievre2006} for the sampling problem at a fixed temperature. Here we only provide some illustrations of our theoretical result.

We consider a toy problem on the one dimensional torus $\R/(2\pi\mathbb Z)$, with the potential
\begin{eqnarray*}
 U(x) & =& \cos(2x)+\frac12\sin(x)+\frac13\sin(10x),
\end{eqnarray*}
which is represented in Fig \ref{FigurePotentiel}.

\begin{figure}[h!]
\centering
\includegraphics[scale=0.35]{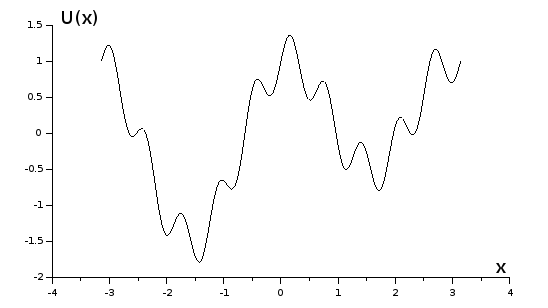}
\caption{A double-well potential with additional small local minima.}\label{FigurePotentiel}
\end{figure}

\begin{figure}[h!]
\centering
\includegraphics[scale=0.35]{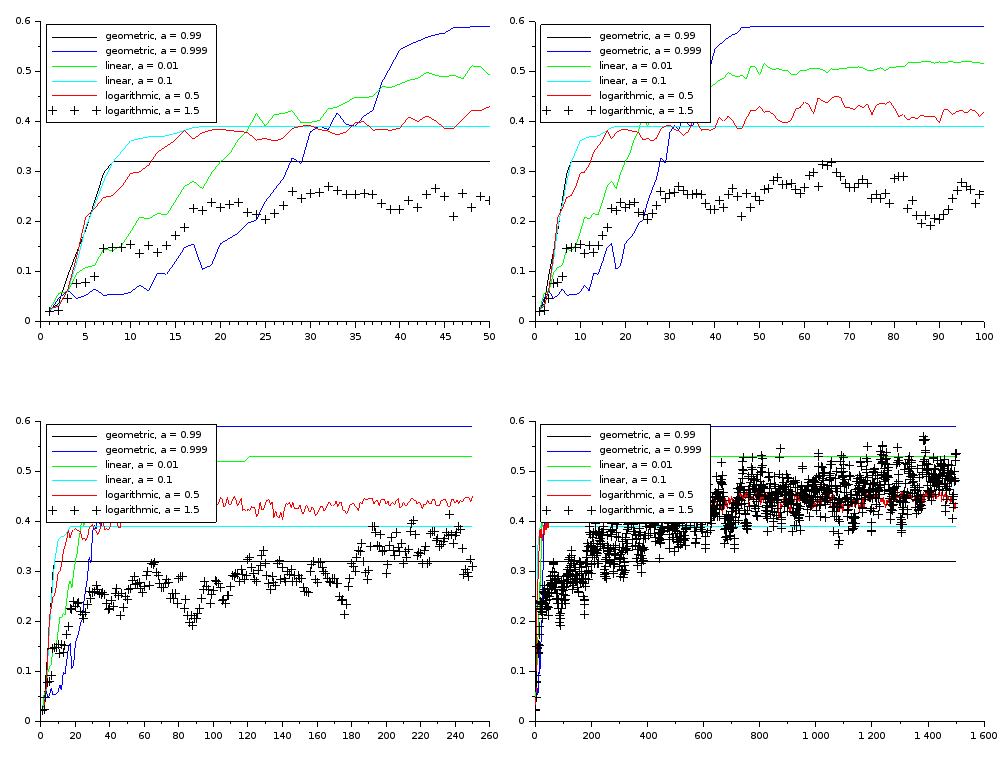}
\caption{An approximation of $t\mapsto \mathbb P \po U(X_t) \leqslant \min U + 0.25\pf$ for different time schedules. The four graphs are the same, except for the time window which is expanded. On the x-axis, each point represent in fact 100 steps of the Euler scheme. The line represented with crosses is the only case that satisfies the condition of Theorem \ref{TheoPrincipal}.}\label{FigureSimu1}
\end{figure}

\begin{figure}[h!]
\centering
\includegraphics[scale=0.35]{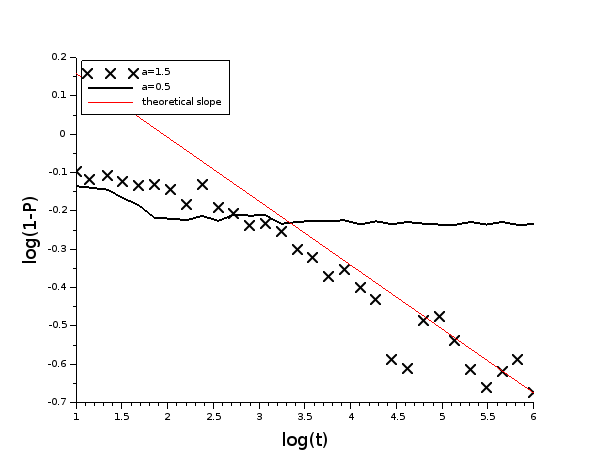}
\caption{A $\log_{10}$-scale representation of the approximation of $t\mapsto 1 - \mathbb P \po U(X_t) \leqslant \min U + 0.25\pf$. The red line is the line  with slope -1/6 which crosses the last point of the case $a=1.5$. Again, each graduation of the x-axis represents 100 Euler scheme steps.}\label{FigureSimu2}
\end{figure}

Rather than of the process \ref{EqSDELangevin}, we run an Euler-Maryuama discretization of the SDE
\begin{eqnarray*}
& & \left\{\begin{array}{rcl} 
\dd X_t & = & Y_t \dd t\\
& & \\
\dd Y_t & = & -\frac{\theta(\varepsilon_t)}{\varepsilon_t}\na_x U(X_t)\dd t -  Y_t \dd t + \sqrt{2 \theta(\varepsilon_t)} \dd B_t,
\end{array}\right.
\end{eqnarray*}
for which the instantaneous invariant measure is still the Gibbs law with temperature $\varepsilon$ and variance $\theta(\varepsilon)$ (the normalization of \ref{EqSDELangevin} has been chosen to lighten some computations, but the proofs of Theorem \ref{TheoPrincipal} can be straightforwardly adapted to this other one). Choosing $\theta(\varepsilon) = \varepsilon$, we compare two different geometric cooling schedule $\varepsilon_k = 5a^k$ with $a = 0.99$ and $0.999$, two linear ones $\varepsilon_k = 5(1+ak)^{-1}$ with $a=0.1$ and $a=0.01$, and two logarithmic ones $\varepsilon_k^{-1} = 1/5 + \ln(1+k)/(aE_*)$ with $a=0.5$ and $a=1.5$ (so that the initial temperature is 5 for all of them). In each case, 100 independent replicas of the process are run and we approximate the probability of success $\mathbb P \po U(X_t) \leqslant \min U + \delta\pf$ by the proportion of replicas for which $U(X_t) \leqslant \min U + \delta$, or more precisely by the mean of this proportion in 100 successive steps. We chose $\delta = 0.25$ so that the set $\{U\leqslant \min U + \delta\}$ is an interval which contains no other minimum than the global one.

The result is represented in Figure \ref{FigureSimu1}. Sooner or later, at some point, all the fast cooling schedule (geometric and linear) freeze, each replica being stuck in a given local minima, so that the proportion of them which are in the correct one stops to evolve after some time. After 12 000 steps of the Euler scheme, only the logarithmic cases still show some variation. But at that time, the probability to be in the right well, in those cases, is around 0.3 and 0.4. It grows with time, but very slowly.

%If we continue the simulation only with the logarithmic schedule, , it takes around $10^6$ steps of the Euler scheme to reach the probability 0.8, and around $ $ to reach 0.9.
Figure \ref{FigureSimu2} represent, in a $\log_{10}$-scale, the approximation of $t\mapsto \mathbb P \po U(X_t) > \min U + 0.25\pf$ for both logarithmic schedules. After $10^6$ iterations, the case $a=0.5$ stabilizes around the probability 0.4 of success. According to Theorem \ref{TheoPrincipal}, in the case $a=1.5$, the probability of failure should decay at least as $t^{-1/6}$, and Figure \ref{FigureSimu2} suggests that this is indeed the right order of magnitude. After $10^8$ Euler steps, the probability of success is about 0.8. Interpolating along the theoretical slope, it would take around $10^{10}$ Euler steps to reach a probability of success of 0.9.

\section{Sketch of the proof}\label{SectionSketch}

Some notations and the structure of the proof of Theorem \ref{TheoPrincipal} are given here at a formal level. It will be checked in the rest of the paper that the objects introduced here are well-defined and that the arguments can be made rigorous. 

\subsection{Hypocoercivity in the homogeneous case}

Consider an homogeneous Markov process $(Z_t)_{t\geq 0}$ on a Polish space $E$. Let $m_t$ be the law of $(Z_t)$ and let $(P_t)_{t\geq 0}$ and $L$  be, respectively,  the associated semi-group and infinitesimal generator, defined on a suitable set $\D$ of non negative test functions $f$ on $E$ by
\begin{eqnarray*}
P_t f(z) & = &   \E\po f(Z_t)\ |\ Z_0 = z\pf\\
L f & = & \underset{t\rightarrow 0}\lim \frac{P_t f - f}{t}.
\end{eqnarray*}
Suppose that the process admits a unique invariant measure $\mu \in\mathcal P(E)$, where  $\mathcal P(E)$ is the set of probability measures on $E$. Furthermore, suppose that $m_0$ and $P_t$ are such that $m_t \ll \mu$ for all $t\geq 0$. Then, $h_t = \frac{\dd m_t}{\dd \mu}$ is a weak solution of
\begin{eqnarray*}
\partial_t h_t & = & L^* h_t,
\end{eqnarray*}
where $L^*$ is the dual in $L^2(\mu)$ of $L$, defined by $\int f (L g) \dd \mu = \int (L^* f) g \dd \mu$ for $f,g\in\D$. 

For $\nu\in\mathcal P(E)$ and $f\in\D$ with $\int f \dd \nu = 1$, we define the relative entropy $\text{Ent}_{\nu}(f)$ and the Fisher information $I_\nu(f)$ by
\begin{eqnarray*}
\text{Ent}_\nu(f) & = & \int f \ln f \dd \nu ,\\
I_{\nu}(f) & = & \int \frac{|\na f |^2}{f} \dd \nu.
\end{eqnarray*}
We say that $\nu$ satisfies a log-Sobolev inequality with constant $\lambda$ if $ \text{Ent}_\nu(f) \leqslant \lambda I_\nu(f)$ for all $f\in\D$. In the classical case of the overdamped Langevin diffusion (the SDE \eqref{EqGradientSto} with a constant temperature $T$), the exponentially fast convergence of the entropy toward zero (which quantifies the convergence of the law of the process toward its equilibrium) is a direct consequence of the log-Sobolev inequality for $\mu$ and of the Gronwall Lemma, since in that case
\begin{eqnarray*}
\partial_t \po \text{Ent}_\mu\po e^{t L^*} f\pf \pf & = & -I_\mu\po e^{t L^*} f\pf .
\end{eqnarray*}
In the kinetic Langevin case, however, the entropy dissipation is not equal to the full Fisher information, but to a partial one, which may vanish even out of equilibrium. In particular, the entropy doesn't decrease at constant rate. The solution proposed in \cite{Villani2009} is to introduce a distorted entropy
\begin{eqnarray*}
H_\mu(f) & = & \int \frac{|\na_x f +\na_y f|^2}{f}\dd\mu + \gamma \text{Ent}_{\mu }(f),
\end{eqnarray*}
for some $\gamma>0$, whose dissipation along the semi-group will be larger than the Fisher information.

In fact, more generally, we could consider different quantities than those (and, for instance, work with the $L^2$-norm as in \cite{DMS2009}). The main point is that what plays the role of  the entropy, the Fisher Information, the distorted entropy and the distorted entropy dissipation,
\begin{eqnarray*}
K_\mu\po f\pf & := & \po \frac{\dd }{\dd t}\pf_{| t=0}\po H_\mu\po e^{tL^*} f\pf\pf,
\end{eqnarray*} 
should be defined such that, for some positive $C,\rho,\lambda$,
\begin{eqnarray*}
H_\mu(f) & \leq & C \po \text{Ent}_\mu(f) + I_\nu(f) \pf,\\
K_\mu(f) & \leq & - \rho I_\mu(f), \\
\text{Ent}_\mu (f) &  \leq & \lambda I_\mu(f).
\end{eqnarray*}
That way,
\[\partial_t \po H_\mu( e^{tL^*} f) \pf \leq - \frac{\rho}{(1+\lambda)C} H_\mu( e^{tL^*} f),\]
and Gronwall's Lemma yields the exponentially fast decay of $H_\mu$, hence of the entropy.

\bigskip

Now, if $\mu$ is the Gibbs law at temperature $\varepsilon>0$ of some potential with several minima, it is known \cite{MenzSchlichting} that the optimal constant $\lambda$ in the log-Sobolev inequality scales as $\exp(E_*/\varepsilon)$, up to some sub-exponential factors. If we were to prove that the dependence of $\rho$ and $C$ with respect to $\varepsilon$ is sub-exponential, then the leading term of the rate of convergence of $H_\mu$ would be $\lambda^{-1}$, which is exactly the rate of convergence in the overdamped  Langevin case.

\subsection{The non-homogeneous case}\label{SubSectionSketch2}

As a second step, we consider a non-constant temperature. More precisely, let $(\varepsilon_t)_{t\geq0}$ be a non-increasing positive cooling schedule. For $\varepsilon>0$, let $L_\varepsilon$ and $\mu_\varepsilon $ be the generator and invariant measure of a process at fixed temperature $\varepsilon$. We call $L_{\varepsilon_t}$ and $\mu_{\varepsilon_t}$ the instantaneous generator and invariant measure of the inhomogeneous process. Under suitable regularity assumptions, there exists an inhomogeneous Markov process $(Z_t)_{t\geq 0}$ such that the semi-group $(P_{s,t})_{0\leq s\leq t}$ defined by
\[P_{s,t}f(z) = \E\po f(Z_t)\ |\ Z_s=z\pf\]
satisfies $\partial_t P_{s,t} f = P_{s,t} L_{\varepsilon_t} f$ for all $s>0$ and suitable test functions $f\in\D$. We write $L_{\varepsilon}^*$ the dual of $L_{\varepsilon}$ in $L^2(\mu_\varepsilon)$. Let $m_t = \mathcal L(Z_t)$, and suppose that $m_0$ is such that $m_t \ll \mu_{\varepsilon_t}$ for all $t\geq 0$, so that 
%that $m_0 \ll \mu_{\varepsilon_0}$ and that for $\mu_{\varepsilon_1} \ll \mu_{\varepsilon_2} $ for all $\varepsilon_1,\varepsilon_2 >0$, which implies $m_t \ll \mu_{\varepsilon_0}$ for all $t\geq 0$, so that
\begin{eqnarray}\label{EqEvolm} 
\partial_t \po m_t \pf &  = & \po L_{\varepsilon_t}^*\po \frac{\dd m_t}{\dd \mu_{\varepsilon_t}}\pf\pf\mu_{\varepsilon_t}.
\end{eqnarray}
In other words, writing $h_t = \frac{\dd m_t}{\dd \mu_{\varepsilon_t}}$ and assuming $\frac{\mu'_{\varepsilon_t}}{\mu_{\varepsilon_t}} = \underset{s\downarrow0}\lim \frac1s\po \frac{\dd\mu_{\varepsilon_{t+s}}}{\dd\mu_{\varepsilon_t}} - 1\pf$ exists and is finite,
\begin{eqnarray}\label{EqEvolh}
\partial_t \po h_t\pf & = & L_{\varepsilon_t}^*\po h_t \pf - h_t\frac{\mu'_{\varepsilon_t}}{\mu_{\varepsilon_t}}.
\end{eqnarray}

Denoting $H_t = H_{\mu_{\varepsilon_t}}(h_t)$ and following the previous strategy, a new term appears in the derivative
\begin{eqnarray*}
\partial_t  H_t  & = & K_t + \varepsilon'_t \widetilde K_t,%\\
%& \leq & \frac12 K_t - \frac12\lambda(\varepsilon_t)   I_t +| \varepsilon'_t| l(\varepsilon_t)(-K_t+H_t+1)(1+M_p(t)).
\end{eqnarray*}
where
\begin{eqnarray*}
K_t \ = \ K_{\mu_{\varepsilon_t}}(h_t) & := & \po \frac{\dd }{\dd s}\pf_{| s=0}\po H_{\mu_{\varepsilon_t}}\po e^{sL_t^*} h_t\pf\pf\\
\widetilde K_t \ = \ \widetilde K_{\mu_{\varepsilon_t}}(h_t) &:=& (\partial_\eta )_{|_{\eta=0}} \po H_{\mu_{{\varepsilon_t}+\eta}}\po h_t \frac{\dd \mu_{{\varepsilon_t}}}{\dd \mu_{{\varepsilon_t}+\eta}} \pf\pf.
\end{eqnarray*}
In the kinetic Langevin case (as in the overdamped one, see \cite{Miclo92}), we will be able to control $|\widetilde K_t |$ with some moments of the process, and to show that these moments' growth with time is slower than any power of $t$. Furthermore $K_t \leqslant - \lambda_t H_t$, where $\lambda_t$ scales like $\exp(-E_*/\varepsilon_t)$, and we will end up with an inequality of the form
\begin{eqnarray*}
\partial_t  H_t  & \leqslant & \po a|\varepsilon_t'|t^{\alpha}  - b t^{-\alpha} e^{-\frac{E_*}{\varepsilon_t}}\pf H_t + c |\varepsilon_t'|  t^\alpha,
\end{eqnarray*}
where $\alpha$ may be chosen arbitrarily small, and $	a,b,c>0$.  The condition on $\varepsilon_t$ ensures that the negative term compensates the others and that $H_t$ goes to zero. By Pinsker's Inequality, this means the total variation distance between $m_t$ the law of the process at time $t$ and its instantaneous equilibrium $\mu_{\varepsilon_t}$ goes to zero. Conclusion follows from the fact that, as $\varepsilon\rightarrow0$, the mass of $\mu_{\varepsilon}$ concentrates on any neighbourhood of the global minima of $U$.

\bigskip

To make rigorous this sketch of proof, to sum up, here is what we have to do in the rest of the paper:
\begin{itemize}
\item Check that everything is well-defined, and that derivation under the integral sign is valid (we will need some truncation arguments).
\item Bound $\widetilde K$ with some moments of the process, and obtain estimates on these moments.
\item Check that the log-Sobolev inequality and the hypocoercive dissipation $K \leqslant - \rho I$ hold.
\item Check that the dependences in $\varepsilon$ are sub-exponential everywhere, except in the log-Sobolev inequality.
\end{itemize}

\subsection{Remarks}

\begin{itemize}
\item For the kinetic Langevin dynamics, the hypocoercive dissipation has been proved by Villani in \cite{Villani2009}. The link between $\widetilde K$ and some  moment of the process appears in the classical annealing analysis of Holley and Stroock \cite{Holley1} and Miclo \cite{Miclo92}. For the kinetic process, moment estimates have been proved at fixed temperature by Talay \cite{Talay} via Lyapunov techniques.
\item The crucial point is, in fact, the dependences with respect to $\varepsilon$. At fixed temperature, the distorted entropy method does not usually yield sharp estimates of the real convergence rate (apart, for now, from the Gaussian case  \cite{Arnold2014,MonmarcheGamma}). Nevertheless, as was already pointed out, since the log-Sobolev constant is exponentially small at low temperature \cite{MenzSchlichting}, only the large deviation scale is relevant, and as long as the other computations stay at a sub-exponential level, they don't need to be sharp.
\item Given a Markov generator $L$ for which a measure $\mu$ is invariant, $(\mu,L)$ is said to satisfy a log-Sobolev inequality with constant $\lambda$ if, for all suitable $f$,
\begin{eqnarray*}
\text{Ent}_\mu(f) & \leqslant & \lambda \int f L\po \ln f\pf \dd \mu.
\end{eqnarray*}
We may call this a Markovian log-Sobolev inequality, in contrast with the classical (or metric) log-Sobolev inequality $\text{Ent}_\mu(f)  \leqslant \lambda I_\mu$. The Markovian inequality depends on a law and some dynamics, while the classical one depends on a law and a gradient, hence a metric. In the classical overdamped langevin case, both coincide, but it is not the case in the kinetic Langevin one, where
\begin{eqnarray*}
  \int f L\po\ln f\pf \dd \mu & = & \int \frac{|\na_y f|^2}{f} \dd \mu,
\end{eqnarray*}
for which a Markovian inequality cannot hold.
\item The fact that the only relevant parameter (the log-Sobolev constant $\lambda$) is just defined from   the measure $\mu$ and from the metric of $\R^d$, and does not depend on the generator, makes sense in view of our previous remark, according to which all reasonable continuous Markov processes devised to sample a Gibbs law should follow the same Arrhenius law, regardless of the local Markov dynamics.
\end{itemize}

\section{Preliminary considerations}\label{SectionPreliminary}

In this section, we consider the diffusion process $(X_t,Y_t)_{t\geq 0}$ on $\R^{2d}$ which solves the SDE 
\eqref{EqSDELangevin}.  The associated generator at temperature $\varepsilon >0$ is
\begin{eqnarray}\label{EqGeneLangevin}
L_\varepsilon & = & y.\na_x - \po\frac{y}{\theta(\varepsilon)} + \frac{\theta(\varepsilon)}{\varepsilon}\na_x U\pf.\na_y + \Delta_y,
\end{eqnarray}
and the corresponding invariant law is 
\[\mu_{\varepsilon}\po \dd x\dd y\pf = Z^{-1}e^{-\frac{U(x)}{\varepsilon}-\frac{y^2}{2\theta(\varepsilon)}}\dd x\dd y,\]
 where $Z = \int e^{-\frac{U(x)}{\varepsilon}-\frac{y^2}{2\theta(\varepsilon)}}\dd x\dd y$ is the normalization constant. 
 We write $\na_x\cdot$ and $\na_y\cdot$ the divergence operators with respect to the variables $x$ and $y$, so that in $L^2\po \mu_\varepsilon\pf$, the dual operators of $\na_x$ and $\na_y$ are
\[\na_x^* = -\na_x\cdot + \frac{1}{\varepsilon}\na_x U\hspace{40pt}\na_y^* = -\na_y\cdot + \frac{1}{\theta\po\varepsilon\pf}y\]
and
\begin{eqnarray}
L_\varepsilon & = & \theta(\varepsilon)\po \na_y^*\na_x - \na_x^*\na_y \pf - \na_y^*\na_y\notag,\\
L^*_\varepsilon & = & \theta(\varepsilon)\po \na_x^*\na_y -\na_y^*\na_x \pf - \na_y^*\na_y\notag\\
 & = & -y.\na_x + \po\frac{\theta(\varepsilon)}{\varepsilon} \na_x U -\frac{y}{\theta(\varepsilon)} \pf.\na_y + \Delta_y.\label{EqGeneLangevin*}
\end{eqnarray}

\subsection{The Gibbs law at small temperature}

As was noted in Section \ref{SubSectionSketch2}, in order to use hypocoercive arguments and prove that a distorted entropy converges to zero, the instantaneous equilibria $\po \mu_\varepsilon\pf_{\varepsilon>0}$ should satisfy a so-called log-Sobolev inequality. 

Under Assumption \ref{Hypo}.\ref{HypoPotentiel}, this inequality is known. More precisely, let $\Cinf$ be the set of smooth positive functions $f$ on $\R^{2d}$, and for $\nu\in\mathcal P\po \R^{2d}\pf$, $f\in\Cinf$ with $\int f \dd\nu = 1$,  let
\begin{eqnarray*}
\text{Ent}_\nu(f) & = & \int f \ln f \dd \nu,\\
I_{\nu}(f) & = & \int \frac{|\na f |^2}{f} \dd \nu
\end{eqnarray*} 
(which are always well-defined, possibly infinite). Recall we say that a function $\varepsilon \mapsto w(\varepsilon)$ is sub-exponential if $\varepsilon \ln w(\varepsilon)$ goes to 0 as $\varepsilon$ goes to 0

\begin{prop}\label{PropLogSob}
Under Assumption \ref{Hypo}.\ref{HypoPotentiel}, there exists $E_*>0$, called the critical depth of $U$, and a sub-exponential $c$ such that, for all  $f\in\Cinf$ with $\int f \dd \mu_\varepsilon=1$,
\begin{eqnarray}\label{EqLogSob}
\emph{Ent}_{\mu_\varepsilon}(f) & \leq & \max\po \frac{\theta(\varepsilon)}{2}, c(\varepsilon) e^{\frac{E_*}{\varepsilon}}\pf I_{\mu_\varepsilon}(f).
\end{eqnarray}
\end{prop}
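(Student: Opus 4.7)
The key structural observation is that the invariant measure $\mu_\varepsilon$ is a product measure,
\[
\mu_\varepsilon(\mathrm dx\,\mathrm dy) \;=\; \nu_\varepsilon(\mathrm dx) \otimes \gamma_\varepsilon(\mathrm dy),
\]
where $\nu_\varepsilon(\mathrm dx) \propto e^{-U(x)/\varepsilon}\,\mathrm dx$ is the Gibbs measure in position and $\gamma_\varepsilon$ is the centered Gaussian with covariance $\sigma(\varepsilon) I_d$ in velocity. Since the Fisher information $I_{\mu_\varepsilon}$ is built from the full gradient $\nabla=(\nabla_x,\nabla_y)$ whose squared norm splits as $|\nabla_x f|^2+|\nabla_y f|^2$, the plan is to invoke the classical tensorization of log-Sobolev: if $\nu_\varepsilon$ and $\gamma_\varepsilon$ satisfy LSI with constants $C_1(\varepsilon)$ and $C_2(\varepsilon)$ respectively, then $\mu_\varepsilon$ satisfies LSI with constant $\max(C_1(\varepsilon),C_2(\varepsilon))$. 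The two factors can then be treated independently.

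For $\gamma_\varepsilon$, a direct scaling from the standard Gaussian log-Sobolev inequality yields the sharp constant $\sigma(\varepsilon)/2$, which is precisely the first argument of the maximum in~\eqref{EqLogSob}. This part is immediate and contributes no exponential factor; it reflects the fact that, thanks to the Gaussian structure in $y$, the "kinetic" half of the problem is never the bottleneck.

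The real work lies in the marginal $\nu_\varepsilon$. First one must know that a log-Sobolev inequality holds at all: the quadratic-at-infinity condition of Assumption~\ref{HypoPotentiel} (together with smoothness of $U$) gives via a Bakry-Émery argument at infinity combined with a Holley-Stroock bounded perturbation that $\nu_\varepsilon$ satisfies LSI with some finite constant $C_1(\varepsilon)$. Second, one must track how $C_1(\varepsilon)$ scales as $\varepsilon\to 0$. Since $U$ is Morse with at least one non-global minimum, the measure $\nu_\varepsilon$ is metastable and $C_1(\varepsilon)$ is necessarily exponentially large: the Arrhenius heuristic and the classical analyses give
\[
C_1(\varepsilon) \;\leq\; c(\varepsilon)\, e^{E_*/\varepsilon},
\]
where $E_*$ is the critical depth of $U$ and $c(\varepsilon)$ is sub-exponential (in fact polynomial in $\varepsilon$ under the Morse assumption). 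This sharp asymptotic is the main non-trivial input; I would invoke it as a known result, citing Holley-Stroock and, for the sharpened sub-exponential prefactor, the Eyring-Kramers-type refinement of Menz-Schlichting.

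The main obstacle is thus not the overall architecture of the proof — tensorization plus Gaussian LSI is routine — but the sharp exponential asymptotics of the log-Sobolev constant of $\nu_\varepsilon$. Fortunately, this is exactly where the definition of the critical depth $E_*$ enters, and it is the deep ingredient from the literature that one needs to borrow. Once both factors are controlled, applying the tensorization inequality to $f\in\Cinf$ produces \eqref{EqLogSob} with the stated $\max$ constant, completing the proof.
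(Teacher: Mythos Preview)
Your proposal is correct and matches the paper's own proof essentially line for line: tensorization of the log-Sobolev inequality for the product measure $\mu_\varepsilon=\nu_\varepsilon\otimes\gamma_\varepsilon$, the Gaussian factor handled by scaling to get the constant $\sigma(\varepsilon)/2$, and the position marginal handled by citing Menz--Schlichting for the sharp $c(\varepsilon)e^{E_*/\varepsilon}$ asymptotics. The only difference is that you spell out the Bakry--\'Emery plus Holley--Stroock reasoning for finiteness of the LSI constant of $\nu_\varepsilon$, whereas the paper simply defers the entire $x$-marginal to the reference.
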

\begin{proof}
Log-Sobolev inequalities tensorizes (cf \cite{LogSob}) and $\mu_\varepsilon$ is the Cartesian product of its $d$-dimensional marginals (in $x$ and $y$), so that it is sufficient to prove such an inequality for each marginal.

The second marginal is the image by the multiplication by $\sqrt{\theta(\varepsilon)}$ of the standard Gaussian law, which satisfies a log-Sobolev with constant $\frac12$, so that it satisfies a log-Sobolev inequality with constant $\frac{\theta(\varepsilon)}{2}$.

As far as the first marginal is concerned, among several proofs, we refer to the recent work~\cite{MenzSchlichting}. 
\end{proof}
The description of $E_*$ is the following: it is the largest energy barrier the process has to overcome, starting from a non-global minimum of $U$, in order to reach a global one (see \cite{MenzSchlichting}).

\bigskip

According to Section \ref{SubSectionSketch2} again, we also need to show that, as $\varepsilon$ goes to zero, the mass of $\mu_\varepsilon$ concentrates around the global minima of $U$:

\begin{lem}\label{LemMassConcentration}
For any $\delta,\alpha>0$, there exists $c>0$ such that, if $\po \tilde X,\tilde Y\pf$ is a r.v. with law $\mu_\varepsilon$, then, 
\begin{eqnarray*}
\mathbb P \po U\po\tilde X\pf \ > \ \min U + \delta \pf & \leqslant & c e^{-\frac{\delta-\alpha}{\varepsilon}}.
\end{eqnarray*} 
\end{lem}
\begin{proof}
Denote by $vol$ the Lebesgue volume of a Borel set of $\R^{d}$, and, for $\beta>0$,
\begin{eqnarray*}
\mathcal A_\beta &=& \left\{x\in\R^{d},\ U(x) < \min U + \beta \right\}\\
\widetilde {\mathcal A_\beta} &=& \left\{x\in\R^{d},\ \beta < U(x) - \min U < \beta +|x| \right\}.
\end{eqnarray*}
Since $U$ is quadratic at infinity, these sets are compact for any $\beta>0$. Then, we simply bound
\begin{eqnarray*}
\frac{\int \mathbb 1_{x\notin \mathcal A_\delta } e^{-\frac{U(x)}{\varepsilon}}\dd x}{\int  e^{-\frac{U(x)}{\varepsilon}}\dd x} & \leqslant & e^{-\frac{\delta-\alpha}{\varepsilon}}  \frac{vol\po \widetilde {\mathcal A_\delta }\pf + \int e^{-\frac{|x|}{\varepsilon}}\dd x}{vol\po \mathcal A_\alpha\pf}\ \leqslant \  e^{-\frac{\delta-\alpha}{\varepsilon}}  \frac{vol\po \widetilde {\mathcal A_\delta }\pf + \int e^{-|x|}\dd x}{vol\po \mathcal A_\alpha\pf}
\end{eqnarray*}
for $\varepsilon<1$ (the bound is clear for $\varepsilon\geqslant 1$, since a probability is less than 1).
\end{proof}
Note that, rather than the crude bound $\int e^{-\frac1\varepsilon (U-\min U)} \geqslant e^{-\frac\alpha\varepsilon} vol(A_\alpha)$, a Laplace method would give a bound of the right order, $\varepsilon^{-\frac12} e^{-\frac{\delta}{\varepsilon}}$, instead of $e^{-\frac{\delta-\alpha}{\varepsilon}}$.

\subsection{Existence and regularity for the density of the process}

%Recall $\mathcal D$ is the set of smooth non-negative functions on $\R^d$ which are bounded away from zero, and whose derivative of all order grow at most polynomialy at infinity.

As a first step to make rigorous the computations announced in Section \ref{SubSectionSketch2}, we show in this section that the density of the process \eqref{EqSDELangevin} is nice.

\begin{prop}\label{PropLangevinSens}
Under Assumptions \ref{Hypo}, the process $(X,Y)$ is well-defined for all time, and the second moment $\mathbb E\po |X_t|^2 + |Y_t|^2 \pf$ is finite for all $t>0$. Moreover, $m_t = \mathcal Law(X_t,Y_t)$ admits a smooth density in $\Cinf$ (still denoted by $m_t$) with respect to the Lebesgue measure.

%. Denoting the law $\mathcal L(X_t,Y_t)$ by $m_t$, if $m_0$ has compact support then  $m_t \ll \mu_{\varepsilon_t}$ for all $t> 0$ with a positive density $h_t = \frac{\dd m_t}{\dd \mu_{\varepsilon_t}} \in\D$.
\end{prop}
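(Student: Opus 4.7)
The plan is threefold: (i) obtain a strong solution up to a possibly finite explosion time via standard SDE theory; (ii) rule out explosion and establish finite moments of all orders using a Lyapunov function of Hamiltonian type; and (iii) upgrade the law of the process to a smooth density via H\"ormander's hypoellipticity theorem. Step~(i) is routine, since all coefficients in \eqref{EqSDELangevin} are smooth (hence locally Lipschitz) and the time-dependent factors $\sigma(\varepsilon_t)/\varepsilon_t$ and $1/\sigma(\varepsilon_t)$ are continuous in $t$ by Assumption~\ref{HypoTempVari}, so a strong solution exists on some stochastic interval $[0,\tau)$.

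For step~(ii), I would use a Hamiltonian-type Lyapunov function with a cross-correction,
\[V_t(x,y) \ = \ \frac{U(x)}{\varepsilon_t} + \frac{|y|^2}{2\sigma(\varepsilon_t)} + \eta\, x\cdot y,\]
with $\eta>0$ small. The first two terms are dictated by the invariant measure $\mu_{\varepsilon_t}$: they are its negative log-density, and a direct computation shows that, when $L_{\varepsilon_t}$ is applied to them, the troublesome $y\cdot\na_x U$ contributions cancel exactly and only $-|y|^2/\sigma(\varepsilon_t)^2 + d/\sigma(\varepsilon_t)$ remains. The cross-term $\eta\, x\cdot y$ then produces the coercive piece $-\eta(\sigma(\varepsilon_t)/\varepsilon_t)\,x\cdot\na_x U \leq -\eta(\sigma(\varepsilon_t)/\varepsilon_t)\,r|x|^2 + \text{bounded}$ via Assumption~\ref{HypoPotentiel}, restoring control on $|x|^2$. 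Tuning $\eta$ then yields a differential inequality $L_{\varepsilon_t}V_t \leq C_t V_t + C'_t$ with $C_t, C'_t$ locally bounded in $t$, and standard localisation at $\tau_n=\inf\{t:|X_t|+|Y_t|\geq n\}$ together with It\^o's formula, Gr\"onwall and Fatou forces $\tau=+\infty$ and $\E\co V_t(X_t,Y_t)\cf<\infty$ on any $[0,T]$. Applying the same procedure to powers $V_t^p$, which satisfy analogous inequalities, yields $\E\co|X_t|^p+|Y_t|^p\cf<\infty$ for every $p\geq 1$.

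For step~(iii), I would apply the parabolic version of H\"ormander's theorem to $\p_t - L_{\varepsilon_t}^*$ on $[0,T]\times\R^{2d}$. Writing $L_{\varepsilon_t}^* = X_0 + \sum_{i=1}^d X_i^2$ with $X_i=\p_{y_i}$ and $X_0$ the drift part of \eqref{EqGeneLangevin*}, a direct computation gives $[X_i,X_0] = -\p_{x_i} - (1/\sigma(\varepsilon_t))\p_{y_i}$, so the family $\{X_i,\,[X_i,X_0]\}_{1\leq i\leq d}$ spans $\R^{2d}$ at every point. H\"ormander's condition is satisfied, the fundamental solution of $\p_t = L_{\varepsilon_t}^*$ is smooth in $(t,x,y)$, and convolving it against the smooth initial density $m_0\in\Cinf$ (guaranteed by Assumption~\ref{HypoTempVari}) produces a smooth density $m_t$ for every $t\geq 0$, necessarily non-negative by its probabilistic interpretation.

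The main obstacle is the Lyapunov computation in step~(ii): the non-ellipticity in $x$ combined with the time-varying Hamiltonian rules out naive candidates such as $U+|y|^2/2$; only the time-dependent Hamiltonian scaling delivers the crucial cancellation of the $y\cdot\na_x U$ terms, while the cross-term $\eta\, x\cdot y$ restores the missing $x$-coercivity at the cost of a manageable $(\sigma(\varepsilon_t)/\varepsilon_t)$-dependent prefactor that is absorbed into $C_t$. Since these constants are only locally bounded in $t$, the resulting moment bounds are not uniform in $t$, but that is enough for the present regularity statement; uniform-in-$t$ estimates will be the business of the next subsection.
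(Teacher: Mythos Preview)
Your three-step outline is sound and the H\"ormander part is essentially what the paper does (the paper simply cites \cite{Taniguchi} for the hypoelliptic regularity after noting the bracket condition). The Lyapunov step, however, is over-engineered compared to the paper's argument, and there is one omission worth flagging.

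The paper uses the \emph{pure} Hamiltonian
\[G(x,y,t) = \frac{U(x)-\min U}{\varepsilon_t} + \frac{|y|^2}{2\sigma(\varepsilon_t)} + 1,\]
with no cross term. The cancellation you correctly identified, $L_{\varepsilon_t}\po\frac{U}{\varepsilon_t}+\frac{|y|^2}{2\sigma(\varepsilon_t)}\pf = -\frac{|y|^2}{\sigma(\varepsilon_t)^2}+\frac{d}{\sigma(\varepsilon_t)}$, already bounds $L_{\varepsilon_t}G$ above by a constant; no $x$-coercivity is needed because for non-explosion and $\E\co|X_t|^2+|Y_t|^2\cf<\infty$ one only requires $\mathcal A G \leq C G$, not $\mathcal A G \leq -cG + C$. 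The cross term $\eta\, x\cdot y$ you introduce is precisely the device the paper deploys \emph{later}, in Lemma~\ref{LemmeLyapunov1}, where a genuinely coercive Lyapunov inequality is needed for the uniform-in-$t$ moment estimates of Proposition~\ref{PropMoment}. So your instinct is right, just premature: your $V_t$ works, but it is more than the present statement demands.

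The omission: since $V_t$ depends on $t$, It\^o's formula applied to $V_t(X_t,Y_t)$ produces not only $L_{\varepsilon_t}V_t$ but also $\partial_t V_t$. The paper handles this cleanly by working with the space--time generator $\mathcal A = L_{\varepsilon_t}+\partial_t$ on the augmented process $(X_t,Y_t,t)$ and checking $\mathcal A G \leq C G$ directly. In your write-up the inequality $L_{\varepsilon_t}V_t\leq C_t V_t + C_t'$ alone is not enough; you should also verify $\partial_t V_t \leq C_t V_t$ (which follows from the local bounds on $\varepsilon_t'$, $\sigma'$ in Assumption~\ref{HypoTempVari}). With that addition your Gr\"onwall argument goes through. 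Finally, the statement only asks for the second moment (the ``$p$'' in the display is a typo); higher moments are the content of the next subsection, so your $V_t^p$ iteration, while correct, is not needed here.
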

\begin{proof}
The s.d.e \eqref{EqSDELangevin} admits a solution at least up to the (random) time the process explode to infinity. Consider the homogeneous Markov process $Z_t=(X_t,Y_t,t)_{t\geq 0}$, with generator $\mathcal A = L_{\varepsilon_t} + \partial_t$, and the Hamiltonian 
\[G(x,y,t) = \frac{U(x)-\min U}{\varepsilon_t} + \frac{|y|^2}{2\theta(\varepsilon_t)}+1.\]
 Then, for all $s \leq t$,
\begin{eqnarray*}
\mathcal A G(x,y,s) & = &  \frac{2-|y|^2}{2\theta(\varepsilon_s)} - \varepsilon_s'\po \frac{U(x)-\min U}{(\varepsilon_s)^2} + \theta'(\varepsilon_s)\frac{|y|^2}{2\theta^2(\varepsilon_t)}\pf\\
& \leq & C G(x,y,s),
\end{eqnarray*}
where $C$ depends on the uniform bounds on $[0,t]$ of $\varepsilon$, $\theta$ and their derivatives. Let $\tau_N = \inf\{s>0,\ G(Z_s) > N\}$, so that %be the escape time of the process from the ball (in $\R^{2d+1}$) of radius $N$. Then 
by It\^o's formula, 
\begin{eqnarray}\label{EqMartingale}
\mathbb E\po e^{-Ct\wedge \tau_N} G(Z_{t \wedge \tau_N})\pf & \leq & \mathbb E\po G(Z_{0})\pf %+ \mathbb E \int_0^{t\wedge \tau_N} \mathcal A G(Z_s)\dd s
\end{eqnarray}
\[\Rightarrow \hspace{20pt} \mathbb P\po \tau_N < t \pf \hspace{10pt} \leq \hspace{10pt} \frac{\mathbb E\po  G(Z_{t \wedge \tau_N})\pf}{N} \hspace{10pt} \leq \hspace{10pt} \frac{e^{Ct}\mathbb E\po G(Z_{0})\pf}{N} \hspace{10pt}\underset{N\rightarrow \infty}\longrightarrow 0.\]
Hence, the process is non-explosive and, applying the monotone convergence Theorem in \eqref{EqMartingale}, we get $\mathbb E(G(Z_t)) <\infty$ for all $t$, which implies $\mathbb E\po |X_t|^2 + |Y_t|^2 \pf <\infty$.

The law of the process is a weak solution of the Kolmogorov forward equation \eqref{EqEvolm}, but from \cite{Taniguchi}, since the H\"ormander bracket condition is fulfilled, this equation admits a unique strong solution which is $\mathcal C^\infty$.
\end{proof}

In particular, $h_t = \frac{\dd m_t}{\dd \mu_{\varepsilon_t}}$ is well-defined and smooth. 

\bigskip

\begin{prop}\label{Propm_t>0}
For all $t>0$, $m_t>0$. In particular $h_t$ is bounded below by a positive constant on any compact set.
\end{prop}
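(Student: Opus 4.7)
The plan is to combine the existence of a smooth positive transition density (a consequence of hypoellipticity plus the Stroock--Varadhan support theorem) with a simple controllability argument for the associated deterministic control system. Concretely, I work in the augmented homogeneous space introduced in the proof of Proposition \ref{PropLangevinSens}, i.e.\ with $Z_s = (X_s, Y_s, s)$ and generator $\mathcal{A} = L_{\varepsilon_s} + \partial_s$. Since $\mathcal{A}$ satisfies H\"ormander's bracket condition (already used in Proposition \ref{PropLangevinSens} via \cite{Taniguchi}), for $0 \leq s < t$ there exists a smooth transition density $p_{s,t}((x_0,y_0),(x_1,y_1))$ with respect to the Lebesgue measure, and
\[
m_t(x_1,y_1) \;=\; \int m_0(x_0,y_0)\, p_{0,t}\bigl((x_0,y_0),(x_1,y_1)\bigr)\, \dd x_0 \dd y_0.
\]
Since $m_0$ is smooth and non-negative with total mass $1$, it is strictly positive on some open set $\Omega \subset \R^{2d}$. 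It therefore suffices to show that $p_{0,t}(z_0,z_1) > 0$ for every $z_0, z_1 \in \R^{2d}$ and every $t > 0$.

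This positivity follows from the support theorem as soon as one can exhibit, for any prescribed endpoints $z_0 = (x_0,y_0)$ and $z_1 = (x_1,y_1)$, a smooth control $u : [0,t] \to \R^d$ steering the deterministic system
\[
\dot x(s) = y(s), \qquad \dot y(s) = -\tfrac{\sigma(\varepsilon_s)}{\varepsilon_s}\na_x U(x(s)) - \tfrac{1}{\sigma(\varepsilon_s)}y(s) + \sqrt{2}\, u(s)
\]
from $z_0$ at time $0$ to $z_1$ at time $t$. Controllability is immediate: pick any smooth curve $x : [0,t] \to \R^d$ interpolating the boundary data $x(0) = x_0$, $\dot x(0) = y_0$, $x(t) = x_1$, $\dot x(t) = y_1$ (e.g.\ a cubic polynomial componentwise), set $y(s) = \dot x(s)$, and define $u(s)$ by solving the $y$-equation algebraically,
\[
u(s) \;=\; \tfrac{1}{\sqrt{2}}\bigl(\ddot x(s) + \tfrac{\sigma(\varepsilon_s)}{\varepsilon_s}\na_x U(x(s)) + \tfrac{1}{\sigma(\varepsilon_s)} \dot x(s)\bigr).
\]
This $u$ is smooth because $U$, $\sigma$ and $\varepsilon_\cdot$ are smooth. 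The support theorem for hypoelliptic diffusions (and its straightforward adaptation to smoothly time-inhomogeneous coefficients, which is exactly why we worked in the augmented space) then yields $p_{0,t}(z_0,z_1) > 0$ for every $z_0, z_1$, and consequently $m_t(z_1) > 0$ for every $z_1 \in \R^{2d}$.

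For the second assertion, fix $t > 0$ and a compact $K \subset \R^{2d}$. By Proposition \ref{PropLangevinSens} the density $m_t$ is continuous, and we have just shown it is strictly positive, so $\min_K m_t > 0$. On the other hand the Gibbs density $\mu_{\varepsilon_t}(x,y) = Z^{-1} e^{-U(x)/\varepsilon_t - |y|^2/(2\sigma(\varepsilon_t))}$ is continuous and bounded above on $K$, hence $h_t = m_t/\mu_{\varepsilon_t}$ is bounded below by a positive constant on $K$.

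The only mild obstacle is the time-inhomogeneity of the coefficients, which prevents a direct invocation of the classical support theorem; this is bypassed by the space-time embedding $Z_s = (X_s,Y_s,s)$ already exploited for non-explosion and hypoellipticity, after which every ingredient is standard.
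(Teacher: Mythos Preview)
Your proof is correct and follows essentially the same strategy as the paper: reduce the positivity of $m_t$ to controllability of the associated deterministic control system, invoking the hypoellipticity already established in Proposition~\ref{PropLangevinSens} together with a support-theorem argument (the paper cites \cite[Lemma 4.2]{GadatPanloup2013} for this reduction).

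The one genuine difference is in how controllability is shown. The paper proves only \emph{approximate} controllability, via a three-phase piecewise control (rapid velocity change, drift along a straight segment, rapid velocity change again) with a small parameter $\delta$ tuned against local Lipschitz bounds of the drift. Your argument is cleaner: since the control enters additively in the $\dot y$-equation and $y=\dot x$, you can prescribe any smooth interpolating curve $x(\cdot)$ (e.g.\ a componentwise cubic matching $x_0,y_0,x_1,y_1$) and solve for $u$ algebraically, yielding \emph{exact} controllability in one line. This buys simplicity and avoids the $\delta$-bookkeeping, at no cost; the paper's construction is more hands-on but gains nothing here, since approximate controllability is already sufficient for the support-theorem conclusion.
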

\begin{proof}
Following \cite[Lemma 4.2]{GadatPanloup2013}, it is sufficient to prove that the deterministic system associated to the diffusion is approximatively controllable, meaning that for any $z_0,z_1\in\R^{2d}$, for any $\eta >0$ and for any $T>0$, there exists a control $u\in\mathcal C([0,T],\R^d)$ such that the solution $z(t) = \po x(t),y(t)\pf$ of
\[\left\{\begin{array}{ccl}
x'(t) &  = & y(t),\\
y'(t) & =& F_t(z(t)) + u(t),\\
z(0) & = & z_0,
\end{array}\right.\]
where $F_t(x,y) = \frac{\na_x U(x)}{\varepsilon_t} + \frac{y}{\theta(\varepsilon_t)}$, satisfies $|z(T)-z_1|\leq \eta$. Let $z^*(t)$ be the uncontrolled motion, namely the solution of the equation with $u = 0$, and
\[\mathcal O = \left\{s z^*(t) + (1-s) z_1,\ t\in[0,T],\ s\in[0,1]\right\}.\]
Since $\mathcal O \times [0,T]$ is compact, $M = \max\po \|\na_z F_t \|_{L^\infty(\mathcal O \times [0,T])},\max\{y,\ (x,y)\in\mathcal O\}\pf$ is finite. Let $\delta>0$ be small enough. We define $u$ as follow:
\begin{itemize}
\item for $s\in[0,\delta]$, $u(s) = \frac{1}{\delta}\po -y_0 + \frac{x_1-x_0}{T}\pf$,
\item for $s\in[\delta+\delta^2,T-\delta-\delta^2]$, $u(s) = -F_s\po (1-s) x_0 + s x_1,\frac{x_1-x_0}{T}\pf $,
\item for $s\in[T-\delta,T]$, $u(s) = \frac{1}{\delta}\po -\frac{x_1-x_0}{T} + y_1\pf$,
\item for $s\in[\delta,\delta^2]$ and $[T-\delta-\delta^2,T-\delta]$, $u$ is linear.
\end{itemize}
Thus, taking $\delta$  small enough with respect to $M$, $z(\delta)$ and $z(\delta+\delta^2)$ are arbitrarily close to $\po x_0, \frac{x_1-x_0}{T}\pf$, so that $z(T-\delta-\delta^2)$ and $z(T-\delta)$ are arbitrarily close to $\po x_1, \frac{x_1-x_0}{T}\pf$, and $z(T)$ is arbitrarily close to $z_1$.
\end{proof}

\subsection{Moment estimates}

The aim of this subsection is  to prove the following:
\begin{prop}\label{PropMoment}
Under Assumptions \ref{Hypo}, for all $p\in\mathbb N$ and $\alpha>0$,  there exists a constant $k$ such that
\[\E\po \po U(X_t) + |Y_t|^2\pf^p\pf \leq k(1+t)^\alpha.\]
\end{prop}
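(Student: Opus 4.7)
The plan is to construct a time-dependent Lyapunov function adapted to the inhomogeneous dynamics, following a Talay-style argument with time-varying coefficients. Since $U$ is quadratic at infinity, $U(x)+|y|^2$ is comparable to $|x|^2+|y|^2$, so it suffices to show $\mathbb{E}[(1+|X_t|^2+|Y_t|^2)^p] \leq k(1+t)^\alpha$ for all $\alpha > 0$. A natural starting point is the ``Hamiltonian'' function at time $t$,
\[
\Phi(x,y,t) = 1 + \frac{U(x)-\min U}{\varepsilon_t} + \frac{|y|^2}{2\sigma(\varepsilon_t)},
\]
already used in the proof of Proposition~\ref{PropLangevinSens}. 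A direct computation gives $L_{\varepsilon_t} \Phi = -|y|^2/\sigma(\varepsilon_t)^2 + d/\sigma(\varepsilon_t)$, providing dissipation only in the $y$-variable. To obtain coercivity in $x$, I would add a cross term $\gamma(\varepsilon_t)\, x\cdot y$ with $\gamma(\varepsilon_t) = c\varepsilon_t/\sigma(\varepsilon_t)$: since $L_{\varepsilon_t}(x\cdot y) = |y|^2 - x\cdot y/\sigma(\varepsilon_t) - (\sigma(\varepsilon_t)/\varepsilon_t)\,x\cdot\nabla_x U$, the confining hypothesis $-x\cdot\nabla_x U \leq -r|x|^2 + M$ combined with $\sigma(\varepsilon)/\varepsilon \geq l > 0$ produces a $-cr|x|^2$ contribution independent of $\varepsilon_t$. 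After standard Cauchy--Schwarz manipulations, the Lyapunov function $V = \Phi + \gamma(\varepsilon_t)\, x\cdot y$ satisfies
\[
L_{\varepsilon_t} V \leq -c_0 \bigl(|x|^2 + |y|^2/\sigma(\varepsilon_t)^2\bigr) + K_0(\varepsilon_t)
\]
where $c_0 > 0$ is independent of $\varepsilon_t$ and $K_0$ is sub-exponential.

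For higher moments, the natural candidate is $V_p = V^p$. The diffusion identity $L_{\varepsilon_t} V^p = pV^{p-1}L_{\varepsilon_t} V + p(p-1)V^{p-2}|\nabla_y V|^2$ together with the bound $|\nabla_y V|^2 \lesssim |y|^2/\sigma(\varepsilon_t)^2 + |x|^2 \lesssim V$ yields $L_{\varepsilon_t} V_p \leq -c_p(\varepsilon_t) V_p + K_p(\varepsilon_t)$ for positive sub-exponential $c_p, K_p$. The time derivative $\partial_t V_p$ produces factors involving $\varepsilon_t'$, $\partial_\varepsilon\sigma(\varepsilon_t)$ and $(U-\min U)/\varepsilon_t^2$; by the slow-cooling bound $|\varepsilon_t'| \leq \varepsilon_0^2/(Et)$, the sub-exponentiality of $\sigma$ and $\partial_\varepsilon\sigma$ assumed in Assumption~\ref{HypoTempVari}, and $\varepsilon_t \geq E/(A+\ln t)$, each of these contributions is of the form sub-polynomial in $t$ divided by $t$, hence of lower order compared with the coercive term.

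Putting things together gives $(\partial_t + L_{\varepsilon_t}) V_p \leq -c_p(\varepsilon_t) V_p + \widetilde K_p(t)$ with $\widetilde K_p$ sub-polynomial in $t$. Applying It\^o's formula on $[0,t]$, rigorously justified by a truncation/localisation argument based on the finiteness of all moments from Proposition~\ref{PropLangevinSens}, and a Gronwall-type comparison, yields $\mathbb{E}[V_p(X_t,Y_t,t)] \leq k(1+t)^\alpha$ for any $\alpha > 0$. Because $\varepsilon_t$ and $\sigma(\varepsilon_t)$ are themselves sub-polynomial in $t$, one has $(U(X_t)-\min U + |Y_t|^2)^p \leq C V_p(X_t,Y_t,t)$ up to a sub-polynomial factor in $t$, which yields the stated bound.

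The main obstacle is to tune the cross-term coefficient $\gamma(\varepsilon_t)$ so that $|x|^2$-coercivity survives uniformly as $\varepsilon_t \to 0$ without destroying the $|y|^2$-dissipation or the positivity of $V$. The assumption $\sigma(\varepsilon) \geq l\varepsilon$ is what makes the choice $\gamma(\varepsilon_t)=c\varepsilon_t/\sigma(\varepsilon_t)$ effective: it keeps the product $\gamma(\varepsilon_t)\sigma(\varepsilon_t)/\varepsilon_t = c$ of order one, while $\gamma(\varepsilon_t) \leq c/l$ stays bounded. A secondary technical point is the rigorous passage through It\^o's formula for the unbounded Lyapunov $V_p$, which is standard via truncation once moments are known to be finite at each fixed time.
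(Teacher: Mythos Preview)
Your approach is essentially the same as the paper's: both construct a Lyapunov function of the form ``Hamiltonian plus cross term $x\cdot y$'' (the paper's $R_\varepsilon = \frac{\sigma}{\varepsilon}U + \frac{|y|^2}{2} + \delta(\varepsilon)\,x\cdot y$ is just a rescaling of your $V$), derive a differential inequality for its $p$-th power via the diffusion identity, control the $\partial_\varepsilon$ contribution using $|\varepsilon_t'|\lesssim 1/t$ and sub-exponentiality, and conclude with Gronwall.

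The one structural difference worth noting: the paper does \emph{not} directly obtain $L_{\varepsilon_t}V_p \leq -c_p(\varepsilon_t)V_p + K_p(\varepsilon_t)$ with a constant (in $(x,y)$) source term as you claim. Instead it keeps the source as $b_3\frac{\sigma}{\varepsilon}(R_\varepsilon^{p-1}+1)$, proceeds by induction on $p$ to get $\mathbb E[R_{\varepsilon_t}^p]\leq C(1+t)^{1+\alpha}$, and then uses Jensen's inequality $\mathbb E[R^p]\leq(\mathbb E[R^{pr}])^{1/r}$ with $r$ large to bring the exponent down to $(1+t)^\alpha$. Your shortcut---absorbing $V^{p-1}$ into $\frac{c_p}{2}V^p$ plus a sub-exponential remainder via Young's inequality---is valid and avoids both the induction and the Jensen step, but you should state it explicitly: this absorption is exactly what makes $K_p(\varepsilon)$ a function of $\varepsilon$ alone rather than of $(x,y)$. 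A second point you gloss over is the Gronwall step itself, which in the paper uses the monotonicity $\varepsilon_u\geq\varepsilon_t$ to bound $\int_s^t c_p(\varepsilon_u)\,du\geq c_p(\varepsilon_t)(t-s)$; you should check that your $c_p(\varepsilon)\sim\min(\varepsilon,1/\sigma(\varepsilon))$ is non-decreasing near $0$, or else take the infimum over $[\varepsilon_t,\varepsilon_0]$ and verify it remains sub-exponential.
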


This result will enable us to control the term denoted as $\widetilde K_t$ in Section \ref{SubSectionSketch2}.

We follow the methods of Talay \cite{Talay} (see also \cite{Wu2001}) and Miclo \cite{Miclo92}, making sure the temperature is only involved in sub-exponential functions.

\begin{lem}\label{LemmeLyapunov1}
%There exist $\rho,N>0$ such that
Let $\delta^{-1}(\varepsilon) = 4\po 1+ \frac1{\sqrt {a_1l}}\pf\po 1 + \frac{\varepsilon}{2r \theta^3\po\varepsilon\pf}\pf$, and 
\begin{eqnarray*}
R_\varepsilon(x,y) & = & \frac{\theta\po \varepsilon\pf}\varepsilon U(x) +  \frac{|y|^2}{2} + \delta(\varepsilon) x.y.
\end{eqnarray*}
Then, there exist constants $c,C,\rho,N>0$ such that
\[c\po U(x) + |y|^2 \pf - N \hspace{15pt}\leq\hspace{15pt} R_\varepsilon(x,y) \hspace{15pt}\leq \hspace{15pt} C\po \frac{\theta(\varepsilon)}{\varepsilon}U(x) + |y|^2 \pf + N ,\]
and
\begin{eqnarray*}
L_\varepsilon\po R_{\varepsilon} \pf & \leq & - \rho \varepsilon^2 R_{\varepsilon} + N\frac{\theta(\varepsilon)}{\varepsilon}.
\end{eqnarray*}
\end{lem}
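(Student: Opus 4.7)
The plan is to verify both assertions by direct calculation. For the framing inequalities, the cross term $\delta(\varepsilon) x\cdot y$ in $R_\varepsilon$ is absorbed via Young's inequality (for instance $|\delta x\cdot y|\leq \delta^2|x|^2 + |y|^2/4$); the assumption $a_1|x|^2 - M \leq U(x) \leq a_2|x|^2 + M$ then converts the $|x|^2$ term into a multiple of $U$, and $\sigma/\varepsilon \geq l$ combined with $\delta \leq \sqrt{a_1 l}/4$ (which follows from the explicit formula for $\delta^{-1}$) ensures that the coefficient in front of $U+|y|^2$ in the lower bound is strictly positive. The matching upper bound is obtained in the same way.

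For the drift estimate, I would compute $L_\varepsilon R_\varepsilon$ using $\nabla_x R_\varepsilon = (\sigma/\varepsilon)\nabla_x U + \delta y$, $\nabla_y R_\varepsilon = y+\delta x$ and $\Delta_y R_\varepsilon = d$. The purpose of the cross term $\delta x\cdot y$ is that the two copies of $(\sigma/\varepsilon) y\cdot\nabla_x U$, appearing in $y\cdot\nabla_x R_\varepsilon$ and in $(\sigma/\varepsilon)\nabla_x U\cdot \nabla_y R_\varepsilon$, cancel, leaving
\[L_\varepsilon R_\varepsilon \;=\; \delta|y|^2 - \frac{|y|^2}{\sigma} - \frac{\delta\, x\cdot y}{\sigma} - \delta\frac{\sigma}{\varepsilon} x\cdot\nabla_x U + d.\]
The confinement hypothesis $-\nabla_x U\cdot x \leq -r|x|^2 + M$ turns the last drift contribution into $-\delta r(\sigma/\varepsilon)|x|^2 + \delta M(\sigma/\varepsilon)$, and the remaining cross term is handled via $|\delta x\cdot y/\sigma|\leq (\delta/(2\sigma))(\eta|x|^2 + |y|^2/\eta)$ with the specific choice $\eta = r\sigma^2/\varepsilon$. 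This choice absorbs precisely half of the $-\delta r\sigma/\varepsilon |x|^2$ coefficient (leaving $-\delta r\sigma/(2\varepsilon)|x|^2$) and produces a $\delta\varepsilon/(2r\sigma^3)|y|^2$ contribution, so the total $|y|^2$ coefficient becomes
\[\delta\Big(1+\frac{\varepsilon}{2r\sigma^3}\Big) - \frac{1}{\sigma} \;=\; \frac{1}{4(1+1/\sqrt{a_1 l})} - \frac{1}{\sigma},\]
and the prescribed $\delta^{-1}$ is precisely engineered so that this quantity is non-positive in the range of $\sigma$ under consideration.

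It then remains to compare with $R_\varepsilon$. Applying $|x|^2\geq (U-M)/a_2$ converts $-\delta r\sigma/(2\varepsilon)|x|^2$ into a multiple of $-(\sigma/\varepsilon)U$ plus a constant multiple of $\sigma/\varepsilon$, while the dimension constant $d$ is absorbed via $d\leq (d/l)\,\sigma/\varepsilon$ thanks to $\sigma/\varepsilon\geq l$. Combined with the upper bound $R_\varepsilon \leq C(\sigma U/\varepsilon + |y|^2) + N$ established in the first step, this yields $L_\varepsilon R_\varepsilon \leq -\rho\varepsilon^2 R_\varepsilon + N \sigma/\varepsilon$ by choosing $\rho$ small enough; the factor $\varepsilon^2$ gives considerable room, and $\rho$ can be taken uniform in $\varepsilon$ because $\delta$ is bounded below (by sub-exponentiality of $\sigma$ and the lower bound $\sigma\geq l\varepsilon$). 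The main obstacle is the cross-term balance: the Young parameter $\eta = r\sigma^2/\varepsilon$ is dictated by the negative $|x|^2$ coefficient, and $\delta^{-1}$ must be reverse-engineered so that the $|y|^2$ coefficient ends up with the right sign; carrying the constants uniformly through in $\varepsilon$ is where the bookkeeping has to be careful.
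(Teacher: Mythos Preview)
Your approach is essentially the paper's: absorb the cross term $\delta x\cdot y$ via Young's inequality (using $\delta\leq\tfrac14\sqrt{a_1l}$) to get the two-sided framing; compute $L_\varepsilon R_\varepsilon$ directly, exploiting the cancellation of the two $y\cdot\nabla_xU$ contributions; split $\delta x\cdot y/\sigma$ by Young with the weight $\eta=r\sigma^2/\varepsilon$ so as to sacrifice exactly half of the confinement term $-\delta r\sigma|x|^2/\varepsilon$; and finally compare with the upper bound on $R_\varepsilon$. The computations line up with the paper almost step for step.

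There is however one real slip in your closing paragraph. You write that ``$\delta$ is bounded below'' and that ``the factor $\varepsilon^2$ gives considerable room.'' This is backwards, and it is precisely the opposite mechanism that closes the argument. From $\sigma\geq l\varepsilon$ one only gets $\varepsilon/\sigma^3\leq 1/(l^3\varepsilon^2)$, hence $\delta^{-1}(\varepsilon)\leq c_{10}/\varepsilon^2$: so $\delta$ is \emph{not} bounded below by a positive constant --- it may degenerate like $\varepsilon^2$. The paper first reaches the intermediate inequality $L_\varepsilon R_\varepsilon\leq -c_6\,\delta(\varepsilon)\,R_\varepsilon + c_7\sigma/\varepsilon$ and \emph{then} substitutes $\delta(\varepsilon)\geq \varepsilon^2/c_{10}$ to obtain the stated form $-\rho\varepsilon^2 R_\varepsilon$. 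The $\varepsilon^2$ is thus forced by the worst-case smallness of $\delta$, not available slack; your sketch does not identify this and therefore does not actually explain why $\rho$ can be taken uniform in~$\varepsilon$.
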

\begin{proof}
Since $\frac{\theta\po \varepsilon\pf}{\varepsilon} \geq l$, $U(x)\geq a_1|x^2| -M$ and $\delta(\varepsilon) \leq  \frac1{4}\sqrt {a_1l}$, we get
\[\frac{l}{2} U(x) + \frac{|y|^2}{4} - l M \hspace{15pt}\leq\hspace{15pt} R_\varepsilon(x,y) \hspace{15pt}\leq\hspace{15pt} \po\frac{\theta\po \varepsilon\pf}\varepsilon  + l\pf U(x)+ |y|^2 + lM .\]
Next,  notice that
\begin{eqnarray*}
L_\varepsilon\po \frac{U}{\varepsilon} + \frac{|y|^2}{2\theta\po \varepsilon\pf} \pf & = & - \frac{|y|^2}{\theta(\varepsilon)} + d ,
\end{eqnarray*}
where $d$ is the dimension. On the other hand,
\begin{eqnarray*}
L_\varepsilon \po x.y\pf & = & |y|^2 - \frac{y.x}{\theta\po\varepsilon\pf} - \frac{\theta\po \varepsilon\pf }{\varepsilon}\na_x U(x).x\\
& \leq & \po 1 + \frac{\varepsilon}{2r \theta^3\po\varepsilon\pf} \pf |y|^2 - \frac{\theta\po \varepsilon\pf }{2\varepsilon}\po 2\na_x U(x).x- r |x|^2\pf\\
& \leq & \frac12 \delta^{-1}(\varepsilon) |y|^2 - \frac{r\theta\po \varepsilon\pf }{2a_1\varepsilon}\po U(x) - M\po1+\frac{a_1}{r}\pf\pf.
\end{eqnarray*}
Hence, for some constants $c_i>0$,
\begin{eqnarray*}
L_\varepsilon\po R_{\varepsilon_t}\pf(x,y) & \leq & - \frac12 |y|^2 - c_1 \frac{\delta\po\varepsilon\pf \theta\po\varepsilon\pf}{\varepsilon} U(x) + c_2\frac{\delta\po\varepsilon\pf \theta\po\varepsilon\pf}{\varepsilon} + d\theta\po\varepsilon\pf\\
& \leq & - \delta\po \varepsilon\pf\po c_3 |y|^2 + c_4\po\frac{\theta(\varepsilon)}{\varepsilon} + l \pf U(x)\pf + c_5\frac{\theta(\varepsilon)}{\varepsilon}\\
& \leq & -c_6\delta\po \varepsilon\pf R_\varepsilon(x,y) + c_7\frac{\theta(\varepsilon)}{\varepsilon}.
\end{eqnarray*}
Finally, from $\theta(\varepsilon)\geq l\varepsilon$ and the fact that $\varepsilon$ is non-increasing,
\[\delta^{-1}(\varepsilon) \hspace{15pt}\leq\hspace{15pt} c_8 + \frac{c_9}{\theta^2(\varepsilon)} \hspace{15pt}\leq \hspace{15pt}\frac{c_{10}}{\varepsilon^2}. \]
\end{proof}

\begin{lem}\label{LemmeLyapunov2}
For all $p\in\mathbb N$, there is a sub-exponential $C_p(\varepsilon)$ such that
\begin{eqnarray*}
\partial_\varepsilon \po  R_{\varepsilon}^p \pf & \leq  & C_p\po\varepsilon\pf \po R_{\varepsilon}^p +1\pf.
\end{eqnarray*}
\end{lem}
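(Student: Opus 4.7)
The plan is a direct pointwise estimate: differentiate $R_\varepsilon$ in $\varepsilon$, bound the resulting coefficients by sub-exponential factors using Assumption~\ref{HypoTempVari}, control $U(x)$ and $x\cdot y$ by $R_\varepsilon$ via Lemma~\ref{LemmeLyapunov1}, and then apply the chain rule.

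First I would compute
\[
\partial_\varepsilon R_\varepsilon(x,y) \;=\; \left(\frac{\sigma'(\varepsilon)}{\varepsilon} - \frac{\sigma(\varepsilon)}{\varepsilon^{2}}\right) U(x) \;+\; \delta'(\varepsilon)\, x\cdot y.
\]
Under Assumption~\ref{HypoTempVari}, $\sigma$ and $\sigma'$ are sub-exponential, and any negative power of $\varepsilon$ is sub-exponential (since $\varepsilon\ln(\varepsilon^{-k}) = -k\varepsilon\ln\varepsilon \to 0$); hence the coefficient of $U(x)$ is sub-exponential. For $\delta'(\varepsilon)$, differentiating $\delta^{-1}(\varepsilon) = 4(1+1/\sqrt{a_1 l})(1+\varepsilon/(2r\sigma^3(\varepsilon)))$ gives a rational expression in $\sigma,\sigma',1/\varepsilon$, all of which are sub-exponential by assumption and the bound $\sigma\geq l\varepsilon$; combined with $\delta(\varepsilon)\leq \text{const}\cdot\varepsilon^2$ (also sub-exponential), the identity $\delta' = -\delta^{2}\partial_\varepsilon(\delta^{-1})$ shows $\delta'$ is sub-exponential.

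Second, I would reduce everything to $R_\varepsilon$ pointwise. Young's inequality yields $|x\cdot y|\leq \tfrac12(|x|^2+|y|^2)$, and $|x|^2\leq (U(x)+M)/a_1$ by Assumption~\ref{HypoPotentiel}. Combining with the lower bound $R_\varepsilon(x,y) \geq \tfrac{l}{2} U(x) + \tfrac{1}{4}|y|^2 - lM$ from Lemma~\ref{LemmeLyapunov1}, one gets universal constants $C_0,N_0>0$ such that
\[
U(x) + |x\cdot y| \;\leq\; C_{0}\bigl(R_\varepsilon(x,y) + N_{0}\bigr).
\]
Together with the previous step this gives $|\partial_\varepsilon R_\varepsilon|\leq \tilde C(\varepsilon)(R_\varepsilon+N_0)$ with $\tilde C$ sub-exponential.

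Finally, the chain rule yields $\partial_\varepsilon R_\varepsilon^{p} = p R_\varepsilon^{p-1}\partial_\varepsilon R_\varepsilon$, so
\[
|\partial_\varepsilon R_\varepsilon^{p}| \;\leq\; p\,\tilde C(\varepsilon)\,|R_\varepsilon|^{p-1}(R_\varepsilon+N_{0}).
\]
Since Lemma~\ref{LemmeLyapunov1} also implies the uniform lower bound $R_\varepsilon \geq -\tfrac{3lM}{2}$, the shifted quantity $\bar R_\varepsilon := R_\varepsilon + \tfrac{3lM}{2}\geq 0$ satisfies $|R_\varepsilon|\leq \bar R_\varepsilon + \tfrac{3lM}{2}$; a standard Young inequality then bounds $\bar R_\varepsilon^{p-1}(\bar R_\varepsilon+\mathrm{const})$ by a multiple of $\bar R_\varepsilon^{p}+1$, and expanding $\bar R_\varepsilon^{p} = (R_\varepsilon+\tfrac{3lM}{2})^{p}$ shows this is itself dominated by a $p$-dependent constant times $R_\varepsilon^{p}+1$. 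Absorbing all $p$-dependent constants into a new prefactor gives the desired inequality with $C_p(\varepsilon)$ proportional to $\tilde C(\varepsilon)$, hence sub-exponential. The only subtle point — and the main bookkeeping obstacle — is ensuring that the possible negativity of $R_\varepsilon$ for small $U$ and $|y|$ is handled correctly, which is resolved by the universal lower bound on $R_\varepsilon$ and the freedom to absorb additive constants into the ``$+1$'' on the right-hand side.
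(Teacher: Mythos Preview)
Your approach is exactly what the paper has in mind; its own proof is a single sentence stating that the claim is ``straightforward from the fact $\sigma$, $\delta$, $\partial_\varepsilon\sigma$ and $\partial_\varepsilon\delta$ are sub-exponential, since the product and sum of sub-exponential functions are still sub-exponential'', and you have supplied precisely the details it omits.

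Two minor bookkeeping remarks. First, the inequality $\delta(\varepsilon)\leq \text{const}\cdot\varepsilon^2$ goes the wrong way: Lemma~\ref{LemmeLyapunov1} proves $\delta^{-1}(\varepsilon)\leq c_{10}\varepsilon^{-2}$, i.e.\ $\delta\geq \varepsilon^2/c_{10}$. What you actually need is only that $\delta$ is bounded above, which follows directly from $\delta^{-1}(\varepsilon)\geq 4(1+1/\sqrt{a_1 l})$, and this is enough for sub-exponentiality of $\delta$ and hence of $\delta'=-\delta^2\partial_\varepsilon(\delta^{-1})$. Second, your very last reduction ``$(R_\varepsilon+c)^p\leq C_p(R_\varepsilon^p+1)$'' can fail pointwise when $p$ is odd and $c=\tfrac{3lM}{2}\geq 1$, since then $R_\varepsilon^p+1$ may be $\leq 0$ near $R_\varepsilon=-c$ while the left side is nonnegative. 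The paper does not address this sign issue at all; the clean remedy is to work with $R_\varepsilon+N$ in place of $R_\varepsilon$ throughout (equivalently, to allow a $p$-dependent additive constant on the right instead of ``$+1$''), which changes nothing in the induction of Lemma~\ref{LemLyapunov3}.
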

\begin{proof}
%Writing $a=V(x)$ and $b=\frac{|y|^2}{2} + 2\delta x.y$, note that
%\begin{eqnarray*}
%\po R_\varepsilon(x,y)\pf^p & = & \sum_{k=0}^p \begin{pmatrix}
%p\\k
%\end{pmatrix}\varepsilon^k  b^k a^{p-k},
%\end{eqnarray*}
%so that (recall $\varepsilon<1$)
It is straightforward from the fact $\theta,\ \delta,\ \partial_\varepsilon \theta$ and $\partial_\varepsilon \delta$ are sub-exponential functions, since the product and sum of sub-exponential functions are still sub-exponential.
\end{proof}

\begin{lem}\label{LemLyapunov3}
For all $p\in \mathbb N$ and $\alpha>0$ there is a constant $\widetilde C_{p,\alpha}$ such that
\begin{eqnarray*}
\E\co \po R_{\varepsilon_t}(X_t,Y_t)\pf^p\cf & \leq & \widetilde C_{p,\alpha} (1+t)^{1+\alpha}.
\end{eqnarray*}
\end{lem}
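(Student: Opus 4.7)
The plan is to derive a Grönwall-type differential inequality for $\phi(t) := \E\co R_{\varepsilon_t}^p(X_t,Y_t)\cf$ and integrate. After localizing via the stopping times $\tau_N = \inf\{s : |(X_s,Y_s)| \geq N\}$, which tend to $+\infty$ almost surely by the non-explosiveness in Proposition~\ref{PropLangevinSens}, I would apply Itô's formula to $R_{\varepsilon_t}^p$ viewed as a smooth function of the space-time variable $(x,y,t)$ and take expectation. Writing $\phi_N(t) = \E\co R_{\varepsilon_{t\wedge\tau_N}}^p(X_{t\wedge\tau_N},Y_{t\wedge\tau_N})\cf$, the martingale contribution drops out and the time-derivative piece is handled by Lemma~\ref{LemmeLyapunov2}, leaving
\[
\phi_N'(t) \hspace{5pt}\leq\hspace{5pt} \E\co \po L_{\varepsilon_t}R_{\varepsilon_t}^p\pf(X_{t\wedge\tau_N},Y_{t\wedge\tau_N})\cf + |\varepsilon_t'|\,C_p(\varepsilon_t)\po \phi_N(t) + 1\pf.
\]

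For the generator term I would use the diffusion chain rule $L_\varepsilon R^p = p R^{p-1}L_\varepsilon R + p(p-1) R^{p-2}|\nabla_y R|^2$. Lemma~\ref{LemmeLyapunov1} gives the first piece as at most $p R^{p-1}\po -\rho\varepsilon^2 R + N\sigma(\varepsilon)/\varepsilon\pf$; for the second, $|\nabla_y R_\varepsilon|^2 = |y+\delta(\varepsilon)x|^2$ is bounded by a sub-exponential multiple of $R_\varepsilon + 1$, by combining the lower estimate on $R_\varepsilon$ in Lemma~\ref{LemmeLyapunov1} with the coercivity $U(x)\geq a_1|x|^2 - M$ from Assumption~\ref{HypoPotentiel} to control $|x|^2+|y|^2$ in terms of $R_\varepsilon$, together with the sub-exponentiality of $\delta$ (which follows from $\delta^{-1}(\varepsilon)$ being polynomial in $1/\varepsilon$). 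Applying Young's inequality $R^{p-1}\leq \eta R^p + C(\eta)$, and similarly $R^{p-2}\leq \eta R^p + C'(\eta)$, with $\eta$ chosen of polynomial order in $\varepsilon$ then absorbs all sub-leading contributions into the negative leading term at the cost of a sub-exponential remainder:
\[
L_\varepsilon R_\varepsilon^p \hspace{10pt}\leq\hspace{10pt} -\frac{p\rho\varepsilon^2}{2}R_\varepsilon^p + K_p(\varepsilon),
\]
with $K_p$ sub-exponential. Substituting yields
\[
\phi_N'(t) \hspace{5pt}\leq\hspace{5pt} -\po \frac{p\rho\varepsilon_t^2}{2} - |\varepsilon_t'|C_p(\varepsilon_t)\pf \phi_N(t) + K_p(\varepsilon_t) + |\varepsilon_t'|C_p(\varepsilon_t).
\]

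The conclusion is then the same logarithmic-versus-polynomial competition used in the proof of Theorem~\ref{TheAbstrait}. By Assumption~\ref{HypoTempVari}, $\varepsilon_t \geq E/(A+\ln t)$ so $\varepsilon_t^2$ decays only like $(\ln t)^{-2}$, whereas $|\varepsilon_t'|C_p(\varepsilon_t) \leq \po \varepsilon_0^2/(Et)\pf\cdot o(t^\beta) = o(t^{\beta-1})$ for every $\beta>0$, since $C_p$ is sub-exponential in $\varepsilon_t$; therefore for $t\geq t_0$ large enough the coefficient of $\phi_N(t)$ on the right is non-positive and may be discarded. The simplified inequality $\phi_N'(t) \leq K_p(\varepsilon_t) + |\varepsilon_t'|C_p(\varepsilon_t) = o(t^\alpha)$ integrates to $\phi_N(t) \leq \phi_N(t_0) + C\, t^{1+\alpha}$; the bound $\phi_N(t_0)<\infty$ uniformly in $N$ is obtained from a classical Grönwall on the compact interval $[0,t_0]$ using that $m_0$ has all moments finite (Assumption~\ref{HypoTempVari}). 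Letting $N\to\infty$ by Fatou's lemma gives the claimed estimate. The main obstacle I anticipate is step two, namely keeping $K_p(\varepsilon)$ sub-exponential through the Young absorption, which requires careful tracking of the negative powers of $\varepsilon$ produced by the choice of the Young parameter; the dominance $|\varepsilon_t'|C_p(\varepsilon_t)\ll \varepsilon_t^2$ of the last step is essentially the same soft argument as in Theorem~\ref{TheAbstrait}.
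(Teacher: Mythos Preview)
Your proof is correct and follows the same overall Gr\"onwall scheme as the paper, but the handling of the sub-leading $R^{p-1}$ term is genuinely different. The paper argues by induction on $p$: it keeps the term $b_3\frac{\sigma(\varepsilon_t)}{\varepsilon_t}(R_{\varepsilon_t}^{p-1}+1)$ on the right-hand side, invokes the inductive bound $n_{t,p-1}\leq \widetilde C_{p-1,\alpha/4}(1+t)^{1+\alpha/4}$, and then integrates the full Gr\"onwall inequality retaining the negative drift $-\rho p\varepsilon_t^2$, which after integration contributes a factor $\varepsilon_t^{-2}=o(t^{\alpha/2})$. You instead close the pointwise bound on $L_\varepsilon R_\varepsilon^p$ in one shot via Young's inequality, absorbing $R^{p-1}$ and $R^{p-2}$ into the leading $-\rho\varepsilon^2 R^p$ at the price of an additive sub-exponential $K_p(\varepsilon)$; you then simply discard the (non-positive) drift coefficient beyond $t_0$ and integrate the remaining $o(t^\alpha)$ right-hand side. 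Both routes give the claimed $(1+t)^{1+\alpha}$; yours avoids the induction and, with the explicit localization by $\tau_N$ and Fatou, is arguably tidier on the justification of the differentiation. Your anticipated obstacle is harmless: since $\sigma$, $\delta$, $\sigma/\varepsilon$ are sub-exponential and the Young parameter can be taken polynomial in $\varepsilon$ (e.g.\ $\eta\sim\varepsilon^3/\sigma$), the remainder $K_p(\varepsilon)$ is only polynomial in $1/\varepsilon$, hence sub-exponential, exactly as you guessed. One cosmetic point: as in the paper, one should first shift $R_\varepsilon$ by its (uniform) lower bound so that $R_\varepsilon\geq 0$ before writing $R^{p-1}\leq \eta R^p + C(\eta)$.
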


\begin{proof}%[proof of Proposition \ref{PropLyapunov}]
We prove this by induction. For $p=0$, the result is trivial. Let $p\geq 1$ and suppose that the result holds for all $q< p$. We write $n_{t,p} = \E\co \po R_{\varepsilon_t}(X_t,Y_t)\pf^p\cf$ the $p^{th}$ (distorted) moment at time~$t$. Thanks to Lemma \ref{LemmeLyapunov2},
\begin{eqnarray*}
\partial_t \po n_{t,p}\pf & = & \varepsilon_t' \left.\partial_{\varepsilon}\right|_{\varepsilon = \varepsilon_t}\po \E\co \po R_{\varepsilon}(X_t,Y_t)\pf^p\cf\pf + \partial_s|_{s=0} \po \E\co \po R_{\varepsilon_t}(X_{t+s},Y_{t+s})\pf^p\cf\pf\\
& & \\
& \leq & |\varepsilon_t'|C_p\po\varepsilon_t\pf\po n_{t,p} +1\pf + \E\co \po L_t(R_{\varepsilon_t})^p\pf(X_t,Y_t)\cf.
\end{eqnarray*}
Since $L_{\varepsilon}$ is a second-order derivation operator, for any $\psi\in\mathcal C^\infty \po \R\pf$, $L_{\varepsilon}\po \psi(f)\pf = \psi'(f) L_{\varepsilon} f + \psi''(f) \Gamma_{\varepsilon}(f)$ where $\Gamma_{\varepsilon} = \frac12 L_{\varepsilon} f^2 - f L_{\varepsilon}f$ is the classical \emph{carr\'e du champ} operator. In the case of the kinetic Langevin process, $\Gamma_{\varepsilon} (f) = |\na_y f|^2$. On the other hand, for some $b_i>0$,
\begin{eqnarray*}
|\na_y R_{\varepsilon_t} |^2 & = & |  y + \delta(\varepsilon_t) x|^2\\
& \leq & b_1 R_{\varepsilon_t}  + b_2.
\end{eqnarray*}
Hence, from Lemma \ref{LemmeLyapunov1}, using that $az^{p-1} + bz^{p-2} \leq (a+b)\po z^{p-1} + 1 \pf $ for $p\geq 2$ and $z\geq 0$,
\begin{eqnarray*}
L_{\varepsilon_t}\po R_{\varepsilon_t}^p\pf & \leq & p R_{\varepsilon_t}^{p-1} L_{\varepsilon_t} R_{\varepsilon_t} + p(p-1) (b_1+b_2)\po R_{\varepsilon_t}^{p-1}+1\pf\\
& & \\
& \leq & - \rho p \varepsilon_t^2 R_{\varepsilon_t}^p + b_3 \frac{\theta\po {\varepsilon_t}\pf}{\varepsilon_t}\po R_{\varepsilon_t}^{p-1} + 1\pf.
\end{eqnarray*}
Thus,
\begin{eqnarray*}
\partial_t \po n_{t,p}\pf & \leq & \po |\varepsilon_t'|C_p(\varepsilon_t) - \rho p \varepsilon_t^2\pf n_{t,p} + b_3 \frac{\theta\po {\varepsilon_t}\pf}{\varepsilon_t} (n_{t,p-1}+1) + |\varepsilon_t'|C_p(\varepsilon_t).
\end{eqnarray*}
%This prove that for any $T>0$, $\underset{t<T}\sup \E\co \po R_{\varepsilon_t}(X_t,Y_t)\pf^p\cf < \infty$.
Let $\alpha>0$. Since %$|\varepsilon_t'| \leq \frac{b_4}{t}$ and
$\varepsilon_t \geq \frac{E}{A+\ln(1+t)}$, we get $C_p(\varepsilon_t) = \underset{t\rightarrow \infty}o\po t^\beta\pf$ for all $\beta>0$, and the same goes for $\frac{\theta(\varepsilon_t)}{\varepsilon_t}$. By induction, and since $|\varepsilon_t'| \leq \frac{b_4}{t}$, %for any $\alpha>0$ there is a constant $b_7$ such that $\E\co \po R_{\varepsilon_t}(X_t,Y_t)\pf^{p-1}\cf \leq b_7 (1+t)^{1+\frac{\alpha}{2}}$, we can find $b_8$ such that for $t \geq t_0$ large enough
\begin{eqnarray*}
\partial_t \po n_{t,p}\pf & \leq &  - b_5 \varepsilon_t^2 n_{t,p}+ b_6 \widetilde C_{p-1,\frac\alpha4} (1+t)^{1+\frac{\alpha}{2}} \\
& & \\
\Rightarrow\hspace{20pt}\partial_t \po n_{t,p} e^{b_5\int_0^t \varepsilon^2_s \dd s}\pf & \leq & b_7(1+t)^{1+\frac{\alpha}{2}} e^{b_5\int_0^t \varepsilon^2_s \dd s}\\
\\
\Rightarrow\hspace{88pt} n_{t,p} & \leq & n_{0,p} +  b_7(1+t)^{1+\frac{\alpha}{2}}\int_0^t e^{-b_5\int_s^t \varepsilon^2_u \dd u}\dd s \\
& \leq &  n_{0,p} +  b_7(1+t)^{1+\frac{\alpha}{2}}\int_0^t e^{-b_5 \varepsilon^2_t (t-s)}\dd s \\
& \leq &  n_{0,p} +  \frac{b_7(1+t)^{1+\frac{\alpha}{2}}}{b_5 \varepsilon^2_t} \\
& \leq & b_8 (1+t)^{1+\alpha}.
\end{eqnarray*}
\end{proof}

\begin{proof}[of Proposition \ref{PropMoment}]
From Lemma \ref{LemmeLyapunov1} and \ref{LemLyapunov3} and Jensen's Inequality, there exist some $a_1,a_2>0$ such that, for any $r\geq 1$,
\begin{eqnarray*}
\E\po \po U(X_t) + |Y_t|^2\pf^p\pf & \leq & a_1 + a_2 \E\po  R^p_{\varepsilon_t}(X_t,Y_t)\pf\\
& \leq & a_1 + a_2 \po\E\po  R^{pr}_{\varepsilon_t}(X_t,Y_t)\pf\pf^{\frac{1}{r}}\\
& \leq & a_1 + a_2 \widetilde C_{1,pr}(1+t)^{\frac{2}{r}},
\end{eqnarray*}
which concludes since $r$ is arbitrarily large.
\end{proof}

\section{Gamma calculus}\label{SectionGamma}

In this section, we are interested, in a formal and general framework, in quantities of the form
\[\Gamma_{L,\Phi}(h) = \frac12 \po L \Phi(h) - D_h \Phi(h).Lh\pf,\]
where $L$ is a Markov operator, $h$ is a non negative function from $\R^d$ to $\R$ which belongs to some functional space $\D$, and $\Phi : \D \rightarrow \R$ is differentiable, with differential operator $D_h \Phi$. Such quantities naturally appear for the following reason: if $L$ admits an invariant measure $\mu$, then $\int L(g) \dd \mu = 0$ for all suitable $g$, so that, writing $h_t = e^{tL}h$,
\[\partial_t \po \int \Phi(h_t) \dd \mu \pf = - \int \po L \Phi(h_t) - D_h \Phi(h_t).Lh_t\pf \dd\mu.\]
In particular, if $\Gamma_{L,\Phi}(h) \geq c \Phi(h)$ for all $h\in\D$ for some $c>0$ then, by the Gronwall's Lemma,
\[ \int \Phi(h_t) \dd \mu \leq e^{-2 ct}\int \Phi(h_0) \dd \mu . \]
There is also a point-wise analogous to this computation: if for $s\in[0,t],$
\[\psi(s) = e^{sL}\po \Phi(h_{t-s})\pf,\]
then
\[\psi'(s) = e^{sL}\po L \Phi(h_{t-s}) - D_h \Phi(h_{t-s}).Lh_{t-s}\pf.\]
If $\Gamma_{L,\Phi}(h) \geq c \Phi(h)$, this yields
\[\Phi(h_t) \leq e^{-2 ct}e^{tL}\Phi(h_0). \]
Integrating with respect to $\mu$ brings  back to $\int \Phi(h_t) \dd \mu \leq e^{-2ct} \int \Phi(h_0)\dd\mu$.

The Gamma calculus is thus a convenient way to retrieve some known computations related to hypocoercivity, in particular the derivation along a semi-group of the distorted entropy, which we will need in Section \ref{SectionDissipation}. We refer to \cite{BolleyGentil,LogSob,BakryGentilLedoux} for an overview of classical Gamma calculus. The latter does not deal with hypoelliptic diffusions, so that we need to expand it in some sense, which is the aim of this section. This will give a new insight on the hypocoercive arguments of Villani \cite{Villani2009}, as has also been proposed by Baudoin in \cite{Baudoin}. Our motivation is to control in a nice way the dependence with respect to the temperature of the estimates we obtain.

Note that, since the first submission of the present work, a more comprehensive presentation of the generalized Gamma presented below has been written in \cite{MonmarcheGamma}. In particular, more general motivations are given, and the novelties with respect to both classical Gamma calculus and existing hypocoercivity results are discussed. This is not the core of the present paper, and thus, we won't go into these details, and only present the computations which will prove useful in Section \ref{SectionDissipation}.

In the remainder of this section, we suppose the space $\D$ is such that everything is well-defined.

\subsection{Quadratic $\Phi$'s}\label{SubSectionQuadra}

First, we consider the case where $\Phi(h) = |A h|^2$ where $A=(A_1,\dots,A_k)$ is a linear operator from $\D$ to $\D^k$. In particular, if $A = I_d$, we retrieve the carr\'e du champ operator which is simply denoted by $\Gamma$:
\[\Gamma(h) \ =\ \Gamma_{L,(.)^2}(h)\ =\ \frac12 L(h^2) - h L h.\]
Another classical example would be $\Phi(h) = |\na h|^2$. For a general quadratic $\Phi$,
\[\Gamma_{L,\Phi}(h)\  =\  \frac12 L|Ah|^2 - (Ah).A(Lh),\]
which directly yields the following:
\begin{lem}\label{LemGammaQuadra}
If $\Phi(h) = |Ah|^2$, then
\begin{eqnarray*}
\Gamma_{L,\Phi}(h) & = & \Gamma(Ah) + (Ah)[L,A]h
\end{eqnarray*}
where for two operator $C$ and $D$, $[C,D]=CD-DC$ and, by convention, we write $\Gamma(Ah) = \sum_{i=1}^k \Gamma(A_i h)$ and $[L,A] = \po [L,A_1],\dots,[L,A_k]\pf$.
\end{lem}
In particular, since $\Gamma$ is always non-negative, $\Gamma_{L,\Phi}(h) \geq (Ah)[L,A]h$.

\bigskip

\textbf{Example 1:} consider the case of a diffusion process with a constant diffusion matrix $B$, namely
\[L h(z) \ = \ b(z).\na h(z) + \na \cdot\po B\na h(z)\pf,\]
where $\na \cdot$ is the divergence operator. Given a constant matrix $M$, set $\Phi(h) = | M \na h |^2$. Then, $\Gamma(h) = \na h.B \na h$ and
\begin{eqnarray*}
\Gamma_{L,\Phi}(h) & \geqslant & (M\na h). [L,M\na] h\\
& = & - (M\na h).M J_b \na h,
\end{eqnarray*}
where $J_b$ is the Jacobian matrix of $b$. Now, suppose that there exists an invertible matrix $M$ for which $-MJ_b(z) M^{-1}$ is bounded below uniformly in $z$ as a quadratic form, meaning that
\[\forall x\in \R^d,\hspace{20pt}-x. MJ_b(z) M^{-1}x \ \geqslant\ \kappa |x|^2\]
 for some $\kappa \in\R$ which does not depend on $z$. Note that this assumption holds in particular if $J_b$ is constant, in which case we retrieve  the results of \cite{Arnold2014}, or, when $B= M^*M$, if the process satisfies a classical Bakry-Emery curvature condition. In that case, writing $h_t = e^{tL} h$, we have
\[|M\na h_t|^2 \ \leqslant \ e^{-2\kappa t} |M\na h_0|^2, \]
and, in particular, $|\na h_t|^2 \leq c e^{-2\kappa t}e^{tL}|\na h_0|^2$ for some $c>0$. Note that such a gradient/semi-group commutation is related to the contraction by $e^{tL}$ of the Wasserstein space $\mathcal W^2$ (see \cite{Kuwada} for further considerations on this topic).

\bigskip

\textbf{Example 2:} consider the case of the kinetic Langevin operator
\begin{eqnarray*}
L & = & -y.\na_x + \po\na_x U(x)-\frac{y}{\theta}\pf.\na_y + \Delta_y.
\end{eqnarray*}
Then, denoting by $\na^2_x U$ the Hessian matrix of $U$,
\begin{eqnarray}\label{EqCommutateur}
\ [L,\na_x]\ = \ -\na^2_x U \na_y,  & \hspace{30pt} &\ [L,\na_y] \ = \  \na_x + \frac{1}{\theta} \na_y .
\end{eqnarray}
If we assume that $\na^2_x U$  is bounded, then $\Gamma_{L,|\na .|^2}(h) \geq - \kappa |\na h|^2$ for $\kappa = \| \na_x^2 U\|_\infty + 1 +\frac{1}{\theta}$, which would only yield, for $h_t = e^{tL}h_0$,
\[|\na h_t|^2\ \leqslant \ e^{2\kappa t} |\na h_0|^2 .\]
Note that this is already more than what would give the Bakry-Emery criterion, since here the Bakry-Emery curvature is equal to $-\infty$.

\bigskip

Now, let $\Phi(h) = |(\na_x + \na_y) h|^2 = |M\na h|^2$, where $M$ is a $d\times(2d)$ matrix constituted of two Identity matrices side by side. In that case, from \eqref{EqCommutateur},
\begin{eqnarray*}
\Gamma_{L,\Phi}(h) & \geqslant & (\na_x+\na_y)h.[L,\na_x+\na_y] h\\
& =& |\na_x h|^2 + \po \frac1\theta - \na_x^2 U(x)\pf |\na_y h|^2 + \po 1+ \frac1\theta -\na_x^2 U(x)\pf\na_y h.\na_x h\\
& \geqslant & |\na_x h|^2 -\| \na_x^2 U\|_\infty  |\na_y h|^2 - \po 1+ \frac1\theta + \| \na_x^2 U\|_\infty \pf \left|\na_y h.\na_x h\right|\\
& \geqslant & \frac12 |\na_x h|^2 - 2 \po\| \na_x^2 U\|_\infty + 1 + \frac1\theta\pf^2 |\na_y h|^2.
\end{eqnarray*}
Writing $\beta = \frac12 + 2 \po\| \na_x^2 U\|_\infty + 1 + \frac1\theta\pf^2$ and $\Psi(h) = \Phi(h) + \beta h^2$, this reads
\[\Gamma_{L,\Psi}\ =\ \Gamma_{L,\Phi}(h) + \beta \Gamma(h)\  \geqslant \  \frac12 |\na h|^2.\]
Up to this point, we have not used the fact that $L$ admits an invariant measure $\mu$; in particular we have not used any information on $\mu$, and we have not split $L$ in its symmetric and anti-symmetric parts in $L^2(\mu)$, as it is usually done in the previous hypocoercive works (such as \cite[Lemma 32]{Villani2009}). The only assumption  on $\mu$ we shall make for now is that it satisfies a Poincar\'e inequality, namely
\[ \int \po h - \int h \dd\mu\pf^2 \dd \mu\leqslant \lambda\int |\na h|^2 \dd \mu \]
for some $\lambda>0$. Then, replacing $h$ by $g = h-\int h\dd \mu$ in the above computation, we get
%\[\int \Gamma_{L,\Psi}\po g \pf \dd \mu\ \geqslant \ \frac12\int |\na g|^2 \ \geqslant \ \frac12 \int \frac{|\na_x g + \na_y g|^2 + \beta  g^2}{2+\beta \lambda}\dd \mu \ = \ \frac{1}{2\po 2 + \beta \lambda\pf} \int \Psi \po g\pf \dd \mu,\]
\[ \int \Psi \po g\pf \dd \mu \ \leqslant \ (2+\beta \lambda) \int |\na g|^2 \dd \mu \ \leqslant \ 2 (2+\beta \lambda) \int \Gamma_{L,\Psi}\po g \pf \dd \mu\  \]
and, by the Gronwall Lemma, since $\int h_t \dd \mu =\int h_0\dd \mu$ for all $t\geqslant 0$,
\[\int \Psi \po h_t - \int h_0 \dd \mu\pf \dd\mu \ \leqslant\ e^{-\frac{t}{2 + \beta \lambda}} \int \Psi \po h_0 - \int h_0 \dd \mu\pf \dd\mu.\]

\subsection{Entropic $\Phi$'s}

In this subsection, $L$ is a diffusion operator:
\[L h \ =\ b \na h + \na \cdot \po B \na h\pf,\]
where $b$ is a vector field and $B$ is a symmetric positive matrix-valued function. This is equivalent to the fact that for any $\psi\in\mathcal C^\infty \po \R\pf$, 
\begin{eqnarray*}
L\po \psi(h)\pf & = & \psi'(h) Lh + \psi''(h) \Gamma(h),\\
L(gf) & = & gL(f)+fL(g) + 2\Gamma(g,f),
\end{eqnarray*}
where $\Gamma(f,g)$ stands for the symmetric bilinear operator associated by polarization to the quadratic operator $\Gamma$. We recall the following classical lemma:
\begin{lem}
If $\Phi(h) = h\ln h$ then
\begin{eqnarray*}
\Gamma_{L,\Phi}(h) & = & \frac{\Gamma(h)}{2 h}.
\end{eqnarray*}
\end{lem}
\begin{proof}
Let $\psi(x) = x \ln x$, so that $\psi'(x) = 1 + \ln x$, $\psi''(x) = \frac1x$, and
\begin{eqnarray*}
2 \Gamma_{L,\Phi}(h) & = & (1+\ln h) Lh + \frac{\Gamma(h)}{h} -(1+\ln h)Lh.
\end{eqnarray*}
\end{proof}
Note that, since $\Gamma$ is the square of a first order differential operator, $\frac{\Gamma(h)}{h} = 4\Gamma\po\sqrt h\pf$. When the diffusion matrix of the generator is $I_d$ and $\mu$ is the invariant measure, $\int \Gamma(\sqrt h) \dd \mu = \int |\na \sqrt h|^2 \dd \mu$ and we retrieve the Fisher Information of $h$ with respect to $\mu$.

\bigskip

Let us consider, for a matrix-valued function $M\in\mathcal C^2\po \R^d,\mathcal M_d(\R)\pf$,
\begin{eqnarray*}
\Phi_M(h) & = & \frac{| M \na  h|^2}h,
\end{eqnarray*}
which may be called a Fisher Information-like term, even if $M$ is not invertible.

\begin{lem}
If $\Phi_M(h)  =  \frac{| M \na  h|^2}h$, then
\begin{eqnarray*}
\Gamma_{L,\Phi_M}(h) &  \geq &   \frac{M\na h.[L,M\na]h}{h} .
\end{eqnarray*}
\end{lem}
\begin{proof}
Making use of the diffusion property of $L$, we compute
\begin{eqnarray*}
\Gamma_{L,\Phi_M}(h) & = &  \frac12L\po \frac{|M\na h|^2}{h} \pf- 4M\na \sqrt{h}.M\na\po \frac{Lh}{2\sqrt h}\pf\\
& = & \frac12\frac{L\po |M\na h|^2\pf}{h} + \frac12|M\na h|^2 L\po \frac1h\pf + \Gamma\po \frac1h,|M\na h|^2\pf\\
& & - \frac{M\na h.M\na Lh}{ h} -  \frac{M\na h . M\na\po h^{-\frac12}\pf}{\sqrt h}Lh\\
& =& \frac{\Gamma_{L,|M\na .|^2}(h)}{h} + \Gamma\po \frac1h,|M\na h|^2\pf + |M\na h|^2\frac{\Gamma(h)}{h^3}.
\end{eqnarray*}
From Lemma \ref{LemGammaQuadra} and the fact that $\Gamma\po \frac1f,g^2\pf = -\frac{\Gamma(f,g^2)}{f^2} =- \frac{2\Gamma(f,g)g}{f^2}$,
\begin{eqnarray*}
\Gamma_{L,\Phi_M}(h) & = &   \frac{\Gamma\po M\na h\pf+M\na h.[L,M\na]h}{h} - \frac{2\Gamma(h,M\na h).M\na h}{h^2}+ |M\na h|^2\frac{\Gamma(h)}{h^3},
\end{eqnarray*}
where by convention $\Gamma(h,M\na h) = \po \Gamma\po h,(M\na h)_1\pf,\dots,\Gamma\po h,(M\na h)_d\pf\pf$. As the diffusion matrix $B$ is symmetric and positive, $B=Q^*Q$ for some real matrix $Q$ and $\Gamma(f) = |Q\na f|^2$, which yields $|\Gamma(f,g)| \leq \sqrt{\Gamma(f)\Gamma(g)}$ and
\begin{eqnarray*}
- \frac{2\Gamma(h,M\na h).M\na h}{h^2} & \geq & -\frac{\Gamma\po M\na h\pf}{h}-|M\na h|^2\frac{\Gamma(h)}{h^3}.
\end{eqnarray*}
\end{proof}

\begin{cor}\label{CorGamma}
Suppose
\begin{eqnarray*}
L & = & -y.\na_x + \po\na_x U(x)-\frac{y}{\theta}\pf.\na_y + \Delta_y
\end{eqnarray*}
with $U$ such that $\|\na_x^2 U\|_\infty < \infty$, and let 
\begin{eqnarray*}
 \beta & = & 1 + 2\po\| \na_x^2 U\|_\infty + 1 + \frac1\theta\pf^2\\
 \Phi(h) & = & \frac{|(\na_x + \na_y) h|^2}{h} + \beta h\ln h,\\
 \Phi_2(h) & = & \frac{|\na h|^2}{h}.
\end{eqnarray*}
Then
\begin{eqnarray*}
\Gamma_{L,\Phi} & \geq & \frac12 \Phi_2\\
\Gamma_{L,\Phi_2} & \geq & - \po \|\na_x^2 U\|_\infty + 1 + \frac{1}{\theta}\pf \Phi_2.
\end{eqnarray*}
\end{cor}
\begin{proof}
All the computations have already been executed in Example 2 of Section \ref{SubSectionQuadra}.
\end{proof}
In Section \ref{SectionDissipation}, this result will be the core of the hypocoercivity dissipation introduced in Section~\ref{SectionSketch}. Note that we followed the ideas of \cite[Lemma 32]{Villani2009}, but in a somehow simpler presentation, so that the dependences with respect to $U$ and $\theta$ is clear, and without using any information about the invariant measure.

\section{Distorted entropy dissipation}\label{SectionDissipation}

We use here the notations of Section \ref{SectionPreliminary}. In particular, $h_t = \frac{m_t }{\mu_{\varepsilon_t}}$, where $m_t$ is the law of the process~\eqref{EqSDELangevin}. We now introduce the distorted entropy
\begin{eqnarray*}
H(t) & = & \int \frac{|\na_x h_t +\na_y h_t|^2}{h_t}\dd\mu_{\varepsilon_t} + \gamma\po \varepsilon_t\pf \text{Ent}_{\mu_{\varepsilon_t}}(h_t),
\end{eqnarray*}
where $\gamma(\varepsilon) = 1 + 2\po \frac{\theta(\varepsilon)}{\varepsilon}\| \na_x^2 U\|_\infty + 1 + \frac1{\theta(\varepsilon)}\pf^2$. The aim of this section is to prove the following:
\begin{prop}\label{PropH}
For all $t>0$, the Fisher information $I(t) = \int \frac{|\na h_t|^2}{h_t}\dd\mu_{\varepsilon_t}$ is finite, and $t\mapsto I(t)$ is locally bounded. Moreover, $H$ is absolutely continuous and there exists a sub-exponential function $\xi$ such that, for almost every $t\geq 0$,
\[H'(t) \leq - I(t)  + |\varepsilon_t'|\xi(\varepsilon_t)\po H(t) + 1+ \mathbb E(X_t^2 + Y_t^2)\pf .\]
\end{prop}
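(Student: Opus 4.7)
Write $H(t) = \int \Phi_{\varepsilon_t}(h_t)\,\dd\mu_{\varepsilon_t}$ with $\Phi_\varepsilon(h) = |(\nabla_x+\nabla_y) h|^2/h + \gamma(\varepsilon)\, h\ln h$. The plan is to differentiate $H$ through its three $t$-dependencies (in $h_t$, in $\mu_{\varepsilon_t}$, and in $\gamma(\varepsilon_t)$), split $H'(t)$ into a ``frozen temperature'' dissipation and a ``drift'' due to the variation of $\varepsilon_t$, control the former via the Gamma calculus of Section~\ref{SectionGamma} and the latter via the moment estimates of Proposition~\ref{PropMoment}. Absolute continuity of $H$ together with pointwise finiteness and local boundedness of $I(t)$ will be recovered \emph{a posteriori}.

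For the dissipation, using $\partial_t h_t = L_{\varepsilon_t}^* h_t - h_t\,\partial_t \log\mu_{\varepsilon_t}$ (equation~\eqref{EqEvolh}) together with the invariance of $\mu_{\varepsilon_t}$ under $L_{\varepsilon_t}$, the standard Gamma-calculus identity rewrites the frozen-temperature contribution as $-2\int \Gamma_{L_{\varepsilon_t}^*, \Phi_{\varepsilon_t}}(h_t)\,\dd\mu_{\varepsilon_t}$. Now $L_{\varepsilon_t}^*$ matches the operator of Corollary~\ref{CorGamma} after the substitution $U\leftarrow(\sigma(\varepsilon_t)/\varepsilon_t)U$, which only affects the sub-exponential prefactors. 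One thus obtains the pointwise inequality $\Gamma_{L_{\varepsilon_t}^*, \Phi_{\varepsilon_t}}(h_t) \geq \tfrac12 |\nabla h_t|^2/h_t$, and integration against $\mu_{\varepsilon_t}$ yields the announced $-I(t)$ bound on this piece.

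For the drift, one computes
\[\partial_t \log\mu_{\varepsilon_t}(x,y) = \varepsilon_t'\po \frac{U(x)}{\varepsilon_t^2} + \frac{|y|^2\sigma'(\varepsilon_t)}{2\sigma(\varepsilon_t)^2} - c(\varepsilon_t)\pf,\]
which by the sub-exponentiality of $\sigma,\sigma'$ and the lower bound $\sigma\geq l\varepsilon$ (Assumption~\ref{HypoTempVari}) is pointwise dominated by $|\varepsilon_t'|\xi_0(\varepsilon_t)(1+U(x)+|y|^2)$ for a sub-exponential $\xi_0$; similarly $|\gamma'(\varepsilon_t)|$ is sub-exponential. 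The main obstacle is then to organize the resulting drift integrals into (a) terms bounded by $\xi(\varepsilon_t)H(t)$, in which the weight $(1+U+|y|^2)$ is absorbed into sub-exponential constants of the hypocoercive expression, and (b) terms of the form $\int h_t(1+U+|y|^2)\,\dd\mu_{\varepsilon_t} = 1+\mathbb E(U(X_t)+|Y_t|^2)$, which by the quadratic-at-infinity bound in Assumption~\ref{HypoPotentiel} is at most $C(1+\mathbb E(|X_t|^2+|Y_t|^2))$. This bookkeeping delivers the target bound $|\varepsilon_t'|\xi(\varepsilon_t)(H(t)+1+\mathbb E(X_t^2+Y_t^2))$.

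The formal manipulations must be made rigorous by truncation, since $|\nabla h_t|^2/h_t$ is not a priori integrable and division by $h_t$ is delicate at infinity despite Proposition~\ref{Propm_t>0}. I would truncate $h_t$ to $h_t^{\delta,R}$ (cutting from below at $\delta>0$ and multiplying by a smooth compactly supported cutoff $\chi_R(x,y)$), perform the computation at the truncated level where all integrations by parts are licit, and then pass to the limits $\delta\downarrow 0$, $R\uparrow\infty$ using smoothness (Proposition~\ref{PropLangevinSens}), positivity on compacts (Proposition~\ref{Propm_t>0}), polynomial moment control (Proposition~\ref{PropMoment}), and dominated convergence. Absolute continuity of $H$ and the inequality for almost every $t$ follow directly from this approximation scheme; the finiteness and local boundedness of $I(t)$ come from the inequality itself ($I(0)<\infty$ by Assumption~\ref{HypoTempVari}, right-hand side locally bounded in $t$), combined with the continuity of $t\mapsto h_t$.
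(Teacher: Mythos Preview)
Your high-level decomposition (frozen-temperature dissipation via Corollary~\ref{CorGamma}, plus a drift term controlled by moments) is exactly right, and matches the paper. However, the truncation step and the argument for finiteness of $I(t)$ both have genuine gaps.

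\textbf{The truncation.} A generic spatial cutoff $\chi_R$ will not work. When you differentiate $\int \chi_R\,\Phi_{\varepsilon_t}(h_t)\,\dd\mu_{\varepsilon_t}$ and integrate by parts using $\int L_\varepsilon^* g\,\dd\mu_\varepsilon=0$, the boundary contribution involves $(2L_\varepsilon^*+L_\varepsilon)\chi_R$ multiplied by $\Phi_{\varepsilon_t}(h_t)$. Since the drift coefficients of $L_\varepsilon^*$ grow linearly in $(x,y)$ while $|\nabla\chi_R|$ is only $O(1/R)$ on a shell of radius $\sim R$, this product is $O(1)$, not $o(1)$. You would need $\int \Phi_{\varepsilon_t}(h_t)\,\dd\mu_{\varepsilon_t}<\infty$ to make it vanish, which is precisely what you are trying to prove. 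The paper avoids this circularity by building the cutoff $\eta_{\kappa,p,n}=l(V_n-p)$ from a Lyapunov function $V_n$ for $2L_\varepsilon^*+L_\varepsilon$, so that $(2L_\varepsilon^*+L_\varepsilon)\eta\geq -c\kappa^2\eta$ with a constant proportional to $\kappa^2$ and \emph{independent of $p$}. The boundary term is then absorbed into a Gronwall inequality for the truncated quantity, one passes $p\to\infty$ by Fatou, and only \emph{afterwards} sends $\kappa\to 0$ to kill the spurious growth.

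\textbf{Finiteness and local boundedness of $I(t)$.} You claim these follow from the inequality $H'(t)\leq -I(t)+\dots$ together with continuity of $t\mapsto h_t$. At best this gives $\int_0^t I(s)\,\dd s<\infty$, i.e.\ local integrability, not pointwise finiteness or local boundedness. The paper instead runs a \emph{separate} Gronwall argument for $I_\eta$, using the second inequality of Corollary~\ref{CorGamma}, namely $\Gamma_{L_\varepsilon^*,\Phi_2}\geq -c(\varepsilon)\Phi_2$, to obtain $\partial_t I_\eta(t)\leq \omega_5(I_\eta(t)+1)$ and hence $I(t)+1\leq e^{\omega_5 t}(I(0)+1)$ after Fatou. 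Only once $I(t)$ is known to be locally bounded does the log-Sobolev inequality give local boundedness of $H$, and only then can the integrated inequality for $H_\eta$ be passed to the limit.
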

Recall that, according to Section \ref{SubSectionSketch2}, such a differential inequality is the main technical argument in the proof of Theorem \ref{TheoPrincipal}, since it will imply that $H(t)$, hence $\text{Ent}_{\mu_{\varepsilon_t}}(h_t)$, goes to zero as $t$ goes to infinity.

\subsection{Truncated differentiation}

In the first instance, we will consider a truncated version of $H$ in order to differentiate under the integral sign. We write
\begin{eqnarray*}
%H_\eta(t) & = & \int \\
\Phi_0(h) & = & h \ln h, \\
\Phi_1(h) & = & \frac{|(\na_x + \na_y) h|^2}{h},\\
\Phi_2(h) & = & \frac{|\na h|^2}{h}.
\end{eqnarray*}
These quantities are well-defined for $h\in \mathcal D = \{h\in\mathcal C^\infty(\R^{2d}),\ h>0\}$ and, for any smooth compactly-supported $\eta \in \mathcal C_c^\infty\po \R^{2d}\pf$, so is $\int \eta \Phi_i(h) \dd \mu_{\varepsilon_t}$ for $i=0,1,2$. From Propositions \ref{PropLangevinSens} and \ref{Propm_t>0}, for all $t\geq 0$, $h_t \in \mathcal D$ and so does $m_t = h_t \mu_{\varepsilon_t}$.

  Let $\mathcal T = \{\eta \in \mathcal C_c^\infty(\R^{2d}), 0 \leq \eta \leq 1\}$. We are interested in the truncated distorted entropy and the truncated Fisher information:
\begin{eqnarray*}
H_\eta(t)  & = & \int \eta \po \Phi_1(h_t) + \gamma(\varepsilon_t) \Phi_0(h_t) \pf \dd \mu_{\varepsilon_t}\\
I_\eta(t) & = & \int \eta \Phi_2(h_t)  \dd \mu_{\varepsilon_t}.
\end{eqnarray*}
 In order to understand the time evolution of (the truncated versions of) $H$ and $I$, following the ideas of Section \ref{SubSectionSketch2} (with the distinction between $K_t$ and $\widetilde K_t$), we will distinguish the roles of, on the one hand, the evolution of the temperature at fixed time (Lemma \ref{LemKtilde} below)  and, on the other hand, the convergence to equilibrium at fixed temperature (Lemma \ref{LemK} below).

%We still denote by $\mu_\varepsilon$ the density of $\mu_\varepsilon$ with respect to the Lebesgue measure.

\begin{lem}\label{LemKtilde}
There exists a sub-exponential function $\xi(\varepsilon)$ such that for all $\eta \in\mathcal T$ and for all $m\in\D$,
\begin{eqnarray*}
\partial_\varepsilon \po \int \eta \Phi_i \po \frac{m}{\mu_{\varepsilon}}\pf \dd \mu_{\varepsilon}\pf & \leqslant & \xi(\varepsilon) \po \int \eta \Phi_i \po \frac{m}{\mu_{\varepsilon}}\pf \dd \mu_{\varepsilon} + \int (1+x^2+y^2) m(x,y) \dd x \dd y\pf
\end{eqnarray*}
for $i=1,2$, and
\begin{eqnarray*}
\partial_\varepsilon \po \gamma(\varepsilon)\int \eta \Phi_0 \po \frac{m}{\mu_{\varepsilon}}\pf \dd \mu_{\varepsilon}\pf & \leqslant & \xi(\varepsilon) \po \gamma(\varepsilon) \int \eta \Phi_0 \po \frac{m}{\mu_{\varepsilon}}\pf \dd \mu_{\varepsilon} + \int (1+x^2+y^2) m(x,y) \dd x \dd y\pf.
\end{eqnarray*}
\end{lem}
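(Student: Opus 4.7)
The plan is to rewrite each integral $\int \eta\, \Phi_i(m/\mu_\varepsilon)\dd\mu_\varepsilon$ as an integral against Lebesgue measure in which the $\varepsilon$-dependence is isolated in $\ln\mu_\varepsilon$ and its gradient. Setting $V_\varepsilon(x,y) = U(x)/\varepsilon + |y|^2/(2\sigma(\varepsilon))$ and $Z(\varepsilon) = \int e^{-V_\varepsilon}\dd x\dd y$, so that $\ln\mu_\varepsilon = -V_\varepsilon - \ln Z(\varepsilon)$, and writing $D_1 = \na_x+\na_y$, $D_2 = \na$, one obtains
\[
\int \eta\, \Phi_0(m/\mu_\varepsilon)\dd\mu_\varepsilon = \int \eta\, m\ln m\, \dd x\dd y + \int \eta\, m(V_\varepsilon + \ln Z)\, \dd x\dd y
\]
and, using $D_i\ln\mu_\varepsilon = -D_i V_\varepsilon$,
\[
\int \eta\, \Phi_i(m/\mu_\varepsilon)\dd\mu_\varepsilon = \int \eta\, \frac{|D_i m + m\, D_i V_\varepsilon|^2}{m}\, \dd x\dd y \qquad (i=1,2).
\]
The compact support of $\eta$ and smoothness and positivity of $m$ legitimise differentiation under the integral sign.

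Next I would collect pointwise $\varepsilon$-derivative estimates keeping every coefficient sub-exponential. Direct calculation gives $\partial_\varepsilon V_\varepsilon = -U(x)/\varepsilon^2 - \sigma'(\varepsilon)|y|^2/(2\sigma^2(\varepsilon))$, and $\partial_\varepsilon D_i V_\varepsilon$ is a linear combination of $\na_x U/\varepsilon^2$ and $y/\sigma^2$. Combined with $U(x)\leq a_2|x|^2+M$ and $|\na_x U(x)|\leq \|\na_x^2 U\|_\infty|x|+|\na_x U(0)|$ from Assumption~\ref{HypoPotentiel}, and with the sub-exponentiality of $\sigma,\sigma',1/\sigma$ (using $\sigma(\varepsilon)\geq l\varepsilon$) from Assumption~\ref{HypoTempVari}, these yield a sub-exponential $\chi(\varepsilon)$ with
\[
|\partial_\varepsilon V_\varepsilon(x,y)|\,+\,|\partial_\varepsilon D_i V_\varepsilon(x,y)|^2 \leq \chi(\varepsilon)(1+|x|^2+|y|^2).
\]
For the sub-exponential bound on $\partial_\varepsilon \ln Z = -\mathbb{E}_{\mu_\varepsilon}(\partial_\varepsilon V_\varepsilon)$, I combine the above with the fact that $\mathbb{E}_{\mu_\varepsilon}(|x|^2+|y|^2)$ is itself sub-exponential: the $y$-marginal is Gaussian with variance $\sigma(\varepsilon)$, while $\mathbb{E}_{\mu_\varepsilon}(|x|^2)$ is uniformly bounded in small $\varepsilon$ via a Laplace-type estimate based on $U(x)\geq a_1|x|^2-M$.

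For $i=1,2$, with $w_\varepsilon = D_i m + m\,D_i V_\varepsilon$ and $\partial_\varepsilon w_\varepsilon = m\,\partial_\varepsilon D_i V_\varepsilon$, Cauchy--Schwarz followed by $2ab\leq a^2+b^2$ give
\begin{align*}
\partial_\varepsilon \int \eta\, \Phi_i(m/\mu_\varepsilon)\dd\mu_\varepsilon &= 2\int \eta\, w_\varepsilon\cdot\partial_\varepsilon D_i V_\varepsilon\,\dd x\dd y\\
&\leq \int \eta\,\Phi_i(m/\mu_\varepsilon)\dd\mu_\varepsilon + \chi(\varepsilon)\int m(1+|x|^2+|y|^2)\dd x\dd y,
\end{align*}
which is the announced inequality (after enlarging $\xi$ to dominate $1$ and $\chi$). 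For $i=0$ the derivative reduces directly to $\int \eta\,m\,(\partial_\varepsilon V_\varepsilon + \partial_\varepsilon \ln Z)\,\dd x\dd y$, bounded in absolute value by $\chi'(\varepsilon)\int m(1+|x|^2+|y|^2)\dd x\dd y$.

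The main subtlety is the $\gamma(\varepsilon)\Phi_0$ version. Writing $\partial_\varepsilon(\gamma\int\eta\Phi_0\dd\mu_\varepsilon) = \gamma'(\varepsilon)\int\eta\Phi_0\dd\mu_\varepsilon + \gamma(\varepsilon)\partial_\varepsilon\int\eta\Phi_0\dd\mu_\varepsilon$, the second summand is bounded in absolute value by $\gamma(\varepsilon)\chi'(\varepsilon)\int m(1+|x|^2+|y|^2)\dd x\dd y$, whose coefficient is sub-exponential. For the first summand, $\gamma\geq 1/2$ and both $\gamma,\gamma'$ are sub-exponential (polynomial in $\sigma, 1/\sigma, \|\na_x^2U\|_\infty$), so $\gamma'/\gamma$ is sub-exponential; when $\gamma\int\eta\Phi_0\dd\mu_\varepsilon\geq 0$ the term is absorbed into $\xi(\varepsilon)\gamma\int\eta\Phi_0\dd\mu_\varepsilon$, and when it is negative the elementary inequality $h\ln h\geq -1/e$ with $\eta\leq 1$ gives $\int\eta\Phi_0\dd\mu_\varepsilon\geq -1/e$, so the first summand is bounded by $|\gamma'|/e$, a sub-exponential constant absorbable into $\xi(\varepsilon)\int m(1+|x|^2+|y|^2)\dd x\dd y$ (which is at least $\xi(\varepsilon)$ since $\int m\dd x\dd y = 1$ in the intended application where $m$ is a probability density). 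Taking $\xi(\varepsilon)$ to dominate all sub-exponential constants produced above yields the claimed inequality.
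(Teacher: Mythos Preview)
Your argument is correct and follows essentially the same route as the paper's proof: both rewrite $\Phi_i(m/\mu_\varepsilon)\,\mu_\varepsilon$ so that the $\varepsilon$-dependence sits entirely in $\ln\mu_\varepsilon$ (your $V_\varepsilon+\ln Z$), compute $\partial_\varepsilon\ln\mu_\varepsilon$ and its gradient, bound these pointwise by a sub-exponential factor times $1+|x|^2+|y|^2$, apply Cauchy--Schwarz (equivalently $2ab\le a^2+b^2$) for $i=1,2$, and handle the possible negativity of $\int\eta\Phi_0\,\dd\mu_\varepsilon$ via $h\ln h\ge -1/e$. The paper packages the sign issue slightly differently, using the pointwise inequality $|\Phi_0(h)|\le \Phi_0(h)+2/e$ rather than your case split, but the content is the same; your treatment is in fact a bit more careful here, and your remark that the absorption of the constant $|\gamma'|/e$ into $\xi(\varepsilon)\int(1+x^2+y^2)m$ relies on $\int m\ge 1$ (true in the intended application $m=m_t$) is a fair caveat that the paper leaves implicit.
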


\begin{proof}
 Note that $\Phi_0 \po \frac{m}{\mu_{\varepsilon}}\pf  \mu_{\varepsilon} = \ln\po \frac{m}{\mu_{\varepsilon}}\pf m $ and that $\Phi_i \po \frac{m}{\mu_{\varepsilon}}\pf  \mu_{\varepsilon} = \left|M_i\na \ln  \po \frac{m}{\mu_{\varepsilon}}\pf\right|^2 m $ for $i=1,2$ for some matrices $M_1,M_2$. Hence, we compute
\begin{eqnarray*}
\partial_\varepsilon \ln \mu_\varepsilon(x,y)
& = &   \frac{U(x)}{\varepsilon^2}  + \frac{\partial_\varepsilon \theta\po\varepsilon\pf |y|^2}{\theta^2(\varepsilon)} - \frac{\int \po \frac{U(u)}{\varepsilon^2}  + \frac{\partial_\varepsilon \theta\po\varepsilon\pf |v|^2}{\theta^2(\varepsilon)}\pf e^{- \frac{U(u)}{\varepsilon} - \frac{|v|^2}{2\theta(\varepsilon)}} \dd u\dd v} {\int e^{- \frac{U(u)}{\varepsilon} - \frac{|v|^2}{2\theta(\varepsilon)}}\dd u\dd v}\\
& = &  \frac{U(x)}{\varepsilon^2}  + \frac{\partial_\varepsilon \theta\po\varepsilon\pf |y|^2}{\theta^2(\varepsilon)} - \int \po  \frac{U(u)}{\varepsilon^2}  + \frac{\partial_\varepsilon \theta\po\varepsilon\pf |v|^2}{\theta^2(\varepsilon)} \pf \dd \mu_\varepsilon(\dd u,\dd v).
\end{eqnarray*}
The moments of the family $(\mu_\varepsilon)_{\varepsilon < 1}$ are clearly bounded uniformly with respect to $\varepsilon$, so that there exists a sub-exponential $\xi_1$ such that
\[|\partial_\varepsilon \ln \mu_\varepsilon(x,y)| + |\na \partial_\varepsilon \ln \mu_\varepsilon(x,y)|^2 \leqslant \xi_1(\varepsilon) (1+x^2 + y^2).\]
It implies, using that $|\Phi_0(h)| = |\Phi_0(h) + \frac1e - \frac1e|\leq \Phi_0(h) + \frac{2}{e}$ (as $\frac1e = \underset{x \in\R}\inf \po x \ln x\pf$), that
\begin{eqnarray*}
\partial_\varepsilon \po \gamma(\varepsilon) \int \eta \Phi_0 \po \frac{m}{\mu_{\varepsilon}}\pf  \mu_{\varepsilon}\pf & = & \int \eta \partial_\varepsilon \ln \mu_\varepsilon  m + \gamma'(\varepsilon) \int  \eta \Phi_0 \po \frac{m}{\mu_{\varepsilon}}\pf  \mu_{\varepsilon}\\
& \leqslant &  \xi_1(\varepsilon) \int (1+x^2 + y^2) m(x,y) \dd x \dd y + |\gamma'(\varepsilon)|\po \int  \eta  \Phi_0 \po \frac{m}{\mu_{\varepsilon}}\pf \mu_{\varepsilon} + \frac2e \pf
\end{eqnarray*} 
and, for $i=1,2$,
\begin{eqnarray*}
\partial_\varepsilon \po \int \eta \Phi_i \po \frac{m}{\mu_{\varepsilon}}\pf  \mu_{\varepsilon}\pf & = &  -2 \int  M_i \na \ln \po \frac{m}{\mu_\varepsilon} \pf. M_i \na \partial_\varepsilon \ln \mu_\varepsilon \ \dd m \\
& \leqslant & \int \eta \Phi_i \po \frac{m}{\mu_{\varepsilon}}\pf  \mu_{\varepsilon} + 2  \xi_1(\varepsilon) \int (1+x^2 + y^2) m(x,y) \dd x \dd y.
\end{eqnarray*}
Conclusion follows with $\xi = 1 + 2 \xi_1 + \frac{|\gamma'|}{\gamma}$.
\end{proof}

For the next Lemma, we use the notions and notations of Section \ref{SectionGamma}.

\begin{lem}\label{LemK}
Suppose that $\eta \in\mathcal T$ is such that $\po 2\Lv + L_\varepsilon\pf \eta \geqslant - c\eta$ for some $c>0$. Then, for all $h\in\D$,
%For $i=0,1,2,$, for $t\in[t_k,t_{k+1}]$, for $p$ large enough,
\begin{eqnarray*}
\partial_t \po \int \eta \Phi_i \po e^{tL_\varepsilon^*} h\pf \dd \mu_{\varepsilon}\pf & \leqslant & c\po \frac1e + \int \eta \Phi_i \po e^{tL_\varepsilon^*} h\pf \dd \mu_{\varepsilon} \pf -  2\int \eta \Gamma_{L_{\varepsilon}^*,\Phi_i} \po e^{tL_\varepsilon^*} h\pf \dd \mu_{\varepsilon}.
\end{eqnarray*}
\end{lem}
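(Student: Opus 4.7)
The plan is to differentiate $t\mapsto\int\eta\,\Phi_i(h_t)\,d\mu_\varepsilon$ under the integral, use the defining identity of the distorted $\Gamma$-operator to produce the dissipation $-2\int\eta\,\Gamma_{L_\varepsilon^*,\Phi_i}(h_t)\,d\mu_\varepsilon$, and then transfer $L_\varepsilon^*$ from $\Phi_i(h_t)$ onto $\eta$ through integration by parts in $L^2(\mu_\varepsilon)$, in a way that makes the operator combination $2L_\varepsilon^*+L_\varepsilon$ of the hypothesis appear naturally.

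First I justify the differentiation under the integral: $\eta\in\mathcal T$ has compact support, while $h_t=e^{tL_\varepsilon^*}h\in\mathcal D$ is smooth and strictly positive on $\mathrm{supp}(\eta)$ by Propositions~\ref{PropLangevinSens} and~\ref{Propm_t>0}. The very definition of $\Gamma_{L_\varepsilon^*,\Phi_i}$ recalled in Section~\ref{SectionGamma} yields the pointwise identity $\partial_t\Phi_i(h_t)=L_\varepsilon^*\Phi_i(h_t)-2\Gamma_{L_\varepsilon^*,\Phi_i}(h_t)$, whence
\[
\partial_t\!\int\eta\,\Phi_i(h_t)\,d\mu_\varepsilon=\int\eta\, L_\varepsilon^*\Phi_i(h_t)\,d\mu_\varepsilon-2\int\eta\,\Gamma_{L_\varepsilon^*,\Phi_i}(h_t)\,d\mu_\varepsilon.
\]

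Next, I rewrite the first integral via two elementary integration by parts formulas in $L^2(\mu_\varepsilon)$. By duality, $\int\eta\, L_\varepsilon^*f\,d\mu_\varepsilon=\int(L_\varepsilon\eta)\,f\,d\mu_\varepsilon$. Since $L_\varepsilon^*$ is itself a diffusion operator and $\int L_\varepsilon^*g\,d\mu_\varepsilon=\int g\,L_\varepsilon 1\,d\mu_\varepsilon=0$, the diffusion chain rule $L_\varepsilon^*(\eta f)=\eta L_\varepsilon^*f+fL_\varepsilon^*\eta+2\Gamma_{L_\varepsilon^*}(\eta,f)$ also gives $\int\eta\, L_\varepsilon^*f\,d\mu_\varepsilon=-\int f\,L_\varepsilon^*\eta\,d\mu_\varepsilon-2\int\Gamma_{L_\varepsilon^*}(\eta,f)\,d\mu_\varepsilon$. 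Forming the linear combination $(-1)$ times the first identity plus $2$ times the second produces exactly the operator combination of the hypothesis:
\[
\int\eta\, L_\varepsilon^*\Phi_i(h_t)\,d\mu_\varepsilon=-\int(2L_\varepsilon^*+L_\varepsilon)\eta\cdot\Phi_i(h_t)\,d\mu_\varepsilon-4\int\Gamma_{L_\varepsilon^*}\bigl(\eta,\Phi_i(h_t)\bigr)\,d\mu_\varepsilon.
\]

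To conclude I apply the hypothesis $(2L_\varepsilon^*+L_\varepsilon)\eta\ge-c\eta$ pointwise, using the elementary bound $\Phi_i\ge-1/e$ (sharp for $\Phi_0(h)=h\ln h$ at $h=1/e$, with $\Phi_1,\Phi_2\ge0$) and the fact that $\int(2L_\varepsilon^*+L_\varepsilon)\eta\,d\mu_\varepsilon=0$ (since $\mu_\varepsilon$ is invariant under both $L_\varepsilon$ and $L_\varepsilon^*$). Thus, adding the constant $1/e$ to $\Phi_i$ does not change the integral of $(2L_\varepsilon^*+L_\varepsilon)\eta$ against it, and
\[
-\int(2L_\varepsilon^*+L_\varepsilon)\eta\cdot\Phi_i(h_t)\,d\mu_\varepsilon=-\int(2L_\varepsilon^*+L_\varepsilon)\eta\cdot\bigl[\Phi_i(h_t)+1/e\bigr]\,d\mu_\varepsilon\le c\Bigl(\tfrac1e+\int\eta\,\Phi_i(h_t)\,d\mu_\varepsilon\Bigr).
\]
The main obstacle will then be disposing of the residual cross-term $-4\int\Gamma_{L_\varepsilon^*}(\eta,\Phi_i(h_t))\,d\mu_\varepsilon$: using that $\Gamma_{L_\varepsilon^*}$ is a derivation in each argument, so that $\Gamma_{L_\varepsilon^*}(\eta,\Phi_i(h))=\Phi_i'(h)\,\Gamma_{L_\varepsilon^*}(\eta,h)$, Cauchy--Schwarz/Young's inequality combined with the specific convex structure of each $\Phi_i$ should allow this term to be absorbed into the dissipation term $-2\int\eta\,\Gamma_{L_\varepsilon^*,\Phi_i}(h_t)\,d\mu_\varepsilon$ already present on the right-hand side, at the possible cost of enlarging the constant $c$; this is the most delicate algebraic step.
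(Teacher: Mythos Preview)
Your argument is correct up to the point where the residual cross-term $-4\int\Gamma_{L_\varepsilon^*}(\eta,\Phi_i(h_t))\,d\mu_\varepsilon$ appears, but the proposed disposal of this term is where the proof breaks down. Absorbing it by Cauchy--Schwarz/Young into the dissipation is not viable: the cross-term equals $-4\int\nabla_y\eta\cdot\nabla_y\Phi_i(h_t)\,d\mu_\varepsilon$, and for $i=1,2$ the factor $\nabla_y\Phi_i(h_t)$ involves \emph{second} derivatives of $h_t$ which are not controlled by the dissipation $\int\eta\,\Gamma_{L_\varepsilon^*,\Phi_i}(h_t)\,d\mu_\varepsilon$ alone; moreover $\nabla_y\eta$ lives on an annulus where $\eta$ may be arbitrarily small, so no bound of the form $|\nabla_y\eta|\le C\eta$ is available. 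Even in the easy case $i=0$ any such absorption would alter the constant $c$, which the statement does not permit.

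The point you are missing is that this cross-term can be dealt with \emph{exactly}, with no loss, by a second use of invariance of $\mu_\varepsilon$. Since $\int L_\varepsilon^*(fg)\,d\mu_\varepsilon=0$ and $L_\varepsilon^*$ is a diffusion,
\[
\int\Gamma_{L_\varepsilon^*}(f,g)\,d\mu_\varepsilon
=-\tfrac12\int\bigl(f\,L_\varepsilon^*g+g\,L_\varepsilon^*f\bigr)\,d\mu_\varepsilon
=-\tfrac12\int f\,(L_\varepsilon^*+L_\varepsilon)g\,d\mu_\varepsilon,
\]
the second equality by duality. Applying this with $f=\Phi_i(h_t)$, $g=\eta$ converts your leftover term into another pure operator-on-$\eta$ integral, which combines with the $-\int\Phi_i(h_t)(2L_\varepsilon^*+L_\varepsilon)\eta\,d\mu_\varepsilon$ you already have. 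After this the only remaining boundary contribution is of the form $-\int\Phi_i(h_t)\,\mathcal A\eta\,d\mu_\varepsilon$ for a first/second-order operator $\mathcal A$ in the span of $L_\varepsilon,L_\varepsilon^*$, and the hypothesis on $\eta$ (together with $\Phi_i+\tfrac1e\ge0$ and $\int\eta\,d\mu_\varepsilon\le1$, exactly as you argued) finishes the proof with the stated constant $c$. This is precisely how the paper proceeds: one subtraction of $\int L_\varepsilon^*(\eta\Phi_i)\,d\mu_\varepsilon=0$, the product rule for $L_\varepsilon^*$, and then the identity above for the carr\'e du champ term---no estimation is needed at all.
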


\begin{proof}
In this proof, we write $h_t = e^{tL_\varepsilon^*}h$ (note that $\D$ is fixed by $e^{tL_\varepsilon^*}$ from Propositions \ref{PropLangevinSens} and \ref{Propm_t>0} applied to the process with constant temperature schedule). Since the support of $\eta$ is compact, we can differentiate under the integral sign:
\begin{eqnarray*}
\partial_t \po \int \eta \Phi_i \po h_t\pf \dd \mu_{\varepsilon}\pf  & = &  \int \eta D\Phi_i \po h_t\pf. L_\varepsilon h_t  \dd \mu_{\varepsilon} \\
& = &   \int \eta D\Phi_i \po h_t\pf. L_\varepsilon^* h_t  \dd \mu_{\varepsilon} - \int L_\varepsilon^*\po \eta \Phi_i(h_t)\pf \dd\mu_\varepsilon .
\end{eqnarray*}
Since $L_\varepsilon^*$ is a diffusion operator,
\[L_\varepsilon^* \po \eta \Phi_i(h_t) \pf = \eta \Lv \Phi_i(h_t) + \Phi_i(h_t) \Lv \eta +  2 \Gamma_\varepsilon \po \eta, \Phi_i(h_t)\pf,\]
and
\[\int \Gamma_\varepsilon(f,g) \dd \mu_\varepsilon = - \frac12 \int \po f \Lv g + g \Lv f \pf \dd \mu_\varepsilon = -\frac12 \int f \po \Lv + L_\varepsilon\pf g \dd \mu_\varepsilon.\]
Hence,
\begin{eqnarray*}
\partial_t \po \int \eta \Phi_i \po h_t\pf \dd \mu_{\varepsilon}\pf  & = &  -  2\int \eta \Gamma_{L_{\varepsilon}^*,\Phi_i} \po h_t\pf \dd \mu_{\varepsilon} - \int \Phi_i(h_t) \po 2\Lv + L_\varepsilon\pf (\eta) \dd \mu_\varepsilon\\
& = & -  2\int \eta \Gamma_{L_{\varepsilon}^*,\Phi_i} \po h_t\pf \dd \mu_{\varepsilon} - \int \po \Phi_i(h_t) + \frac1e\pf \po 2\Lv + L_\varepsilon\pf (\eta) \dd \mu_\varepsilon.
\end{eqnarray*}
The constant $\frac1e $ is added in order to ensure $\Phi_i(h) + \frac1e\geq 0$ for all $i=0,1,2$.
 We conclude with
\[\int \eta \dd \mu_\varepsilon \leqslant \int \dd \mu_\varepsilon = 1.\]
\end{proof}

\subsection{Construction of the truncation}

We now describe a particular choice of $\eta\in\mathcal T$ which satisfies the assumption of Lemma~\ref{LemK}.

Let $\kappa >0$ and
\[l(v) = \left\{\begin{array}{cll}
1 & \hspace{10pt}& \text{if }v\leq -\frac{\pi}{\kappa},\\
 \frac{-\kappa v - \sin(\kappa v)}{2\pi} + \frac12  & \hspace{10pt}& \text{if }v\in\left[-\frac{\pi}{\kappa},\frac{\pi}{\kappa}\right],\\
0 & \hspace{10pt}& \text{if } v\geq \frac{\pi}{\kappa}.
\end{array}\right.\]
Then $l\in\mathcal C^2(\R)$ is a non-increasing non-negative function with
\[l''(v) = \frac{\kappa^2}{2\pi} \sin(\kappa v) \mathbb 1_{v\in \left[-\frac{\pi}{\kappa},\frac{\pi}{\kappa}\right]} \geq - \frac{\kappa^2}{2\pi} \mathbb 1_{v\in \left[-\frac{\pi}{\kappa},0\right]} \geq - \frac{\kappa^2}{4\pi}l(v).\]
Let $\eta(x,y) = l\po V(x,y) - p \pf$ for $p\in\mathbb N$, with $V$ to be chosen later in order to satisfy the following conditions:
\begin{itemize}
\item $V$ goes to $+\infty$ at $\infty$ (the level sets of $V$ are compact),
\item $V$ is Lipschitz,
\item $V$ is a Lyapunov function for $L_{\varepsilon_t} + 2L_{\varepsilon_t}^*$, in the sense $\po L_{\varepsilon_t} + 2L_{\varepsilon_t}^*\pf V \leqslant 0$ outside a compact.
\end{itemize}
In the first instance, suppose that we have constructed such a function $V$.
Let $\tilde L_t =  2L_{\varepsilon_t}^* + L_{\varepsilon_t}$, and let $p$ be large enough so that the compact $\{\tilde L_t V \geq 0\}$ is included in $\{V \leq p - \frac\pi\kappa\}$. On  $\{V \leq p - \frac\pi\kappa\}$, $\eta = 1$ and thus $\tilde L_t \eta = 0$. On  $\{V \geq p - \frac\pi\kappa\}$, $\tilde L_t V \leq 0$ and thus%Recall $\eta = l(V - p)$ and thus, writing $\tilde L =  2\Lv + L_\varepsilon$,
\begin{eqnarray*}
\tilde L_t\eta  & = &  l'(V-p) \tilde L_t V + l''(V-p) \Gamma_{\tilde L_t, (.)^2}(V)\\
& \geqslant & 3 l''(V-p)| \na_y V |^2\\
& \geqslant & - \frac{3\kappa^2}{4\pi} \| \na_y V \|^2_\infty \eta.
\end{eqnarray*}
Hence, in order to apply Lemma \ref{LemK}, it only remains to find a Lyapunov function $V$. The problem is that it has to be a Lyapunov function for $\tilde L_t$ uniformly in $t$, and it is not clear whether such a function exists. To solve this problem, we will work on small intervals of time.

\bigskip

Let $T >0$. In the following, we will call $\omega_i$, $i\in\mathbb N$, several constants that depend on $T$ but not on $t\in[0,T]$. Since $t\mapsto\varepsilon_t$ and $t\mapsto \theta(\varepsilon_t)$ are locally Lipschitz functions, and from Assumption \ref{HypoPotentiel}, there exists $\omega_1 >0$ such that for all $t,s\in[0,T]$,
\[|\po \tilde L_t - \tilde L_s \pf f(x,y)| \leqslant \omega_1 (1 + |x| + |y|)|\na_y f|(x,y)| t-s|.\]
Let $R_\varepsilon$ be such as defined in Lemma \ref{LemmeLyapunov1}. Then it is easy to see that there exist $\omega_i$, $i=2,3,4$, such that for all $t\in[0,T]$,
\begin{eqnarray*}
\tilde L_{t} R_{\varepsilon_t}(x,y) & \leqslant & - \omega_2 (1+|x| + |y|)^2 + \omega_3,\\
|\na_y R_\varepsilon(x,y)| & \leqslant & \omega_4(1+|x|+|y|),
\end{eqnarray*}
and thus
\begin{eqnarray*}
\tilde L_{s} R_{\varepsilon_t}(x,y) & \leqslant & - \omega_2 (1+|x| + |y|)^2 + \omega_3 + \omega_1 \omega_4(1 + |x| + |y|)^2| t-s|.
\end{eqnarray*}
In particular, if $|t-s| \leqslant \frac{\omega_2}{2\omega_1\omega_4}$, outside the compact set $\mathcal K =\{(1+|x| + |y|)^2 \leq \frac{2\omega_3}{\omega_2}\}$,
\begin{eqnarray*}
L_{\varepsilon_s} R_{\varepsilon_t}(x,y) & \leqslant & 0 .
\end{eqnarray*}
Let $t_0=0 < t_1 < \dots < t_N = T$ be such that $|t_n - t_{n+1}| < \frac{\omega_2}{2\omega_1\omega_4}$, and let
\[V_n(x,y) = \sqrt{1+ R_{\varepsilon_{t_n}}(x,y) - \underset{\R^2}\min R_{\varepsilon_{t_n}}}.\]
Then $V_n$ is a Lipschitz function and if $t\in[t_n,t_{n+1}]$, outside $\mathcal K$,
\begin{eqnarray*}
\tilde L_t V_n & = & \frac{\tilde L_t R_{\varepsilon_{t_n}}}{2 V_n} - \frac{\Gamma_{\tilde L_t,(.)^2}\po R_{\varepsilon_{t_n}}\pf}{2 V_n^3}\\
& \leq & 0.
\end{eqnarray*}
 When $n$ and $\kappa$ are fixed, there exists $p_0$ such that $\mathcal K \subset \{V_n \leq p_0 - \frac\pi\kappa\}$.

We are now ready to define our truncation:
\[\eta_{\kappa,p,n} = l(V_n -p)\]
(recall that $\kappa$ intervenes in the definition of $l$). In this section, we have proved the following:

\begin{lem}\label{Lemeta}
For all $n\in  \llbracket 0, N-1\rrbracket $, $\kappa>0$,  $t\in[t_n,t_{n+1}]$ and  $p \geq p_0$, we have
\[\tilde L_t \eta_{\kappa,p,n} \geqslant -\frac{3\kappa^2}{4\pi} \| \na_y V_n\|^2 \eta_{\kappa,p,n}.\]
\end{lem}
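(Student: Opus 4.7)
The plan is to apply the diffusion chain rule to $\eta_{\kappa,p,n} = l(V_n - p)$ and then split the space into the region where $l$ is constant and the region where $\tilde L_t$ is known to act coercively on $V_n$. Since $\tilde L_t = 2L_{\varepsilon_t}^* + L_{\varepsilon_t}$ is a second-order differential operator whose only second-order part is a multiple of $\Delta_y$ (both $L_{\varepsilon_t}$ and $L_{\varepsilon_t}^*$ contribute $\Delta_y$), its \emph{carr\'e du champ} is $\Gamma_{\tilde L_t,(\cdot)^2}(f) = 3|\na_y f|^2$. Therefore the diffusion formula gives
\[\tilde L_t \eta_{\kappa,p,n} \;=\; l'(V_n - p)\,\tilde L_t V_n \;+\; 3\, l''(V_n - p)\,|\na_y V_n|^2.\]

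First I would treat the region $\{V_n \leq p - \pi/\kappa\}$: by construction of $l$, both $l'$ and $l''$ vanish there (and $l = 1$), so $\tilde L_t \eta_{\kappa,p,n} = 0$, which trivially majorises $-\tfrac{3\kappa^2}{4\pi}\|\na_y V_n\|^2 \eta_{\kappa,p,n}$. On the complementary region $\{V_n \geq p - \pi/\kappa\}$, the hypothesis $p \geq p_0$ guarantees $\{V_n \geq p - \pi/\kappa\} \subset \{V_n \geq p_0 - \pi/\kappa\} \subset \mathcal K^c$. The preceding construction, via the chain rule $\tilde L_t V_n = \tfrac{1}{2V_n}\tilde L_t R_{\varepsilon_{t_n}} - \tfrac{1}{4 V_n^3}\Gamma_{\tilde L_t}(R_{\varepsilon_{t_n}})$ together with the bound $\tilde L_t R_{\varepsilon_{t_n}} \leq 0$ outside $\mathcal K$ established for $t \in [t_n, t_{n+1}]$, then yields $\tilde L_t V_n \leq 0$ on this region. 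Combined with $l'(V_n - p) \leq 0$ (since $l$ is non-increasing), this makes $l'(V_n - p)\,\tilde L_t V_n \geq 0$.

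It remains to bound the second-order term from below. Here I would simply invoke the pointwise inequality $l''(v) \geq -\tfrac{\kappa^2}{4\pi}\, l(v)$ verified in the construction of $l$, which gives
\[3\,l''(V_n - p)\,|\na_y V_n|^2 \;\geq\; -\frac{3\kappa^2}{4\pi}\,|\na_y V_n|^2\, l(V_n - p) \;\geq\; -\frac{3\kappa^2}{4\pi}\,\|\na_y V_n\|^2\, \eta_{\kappa,p,n},\]
and concatenating the two regions yields the announced estimate.

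The only real subtlety is the passage of the Lyapunov property from $R_{\varepsilon_{t_n}}$ to its square-root-type distortion $V_n$, and the fact that this property holds \emph{uniformly for $t \in [t_n, t_{n+1}]$} rather than only at $t = t_n$. Both points, however, have already been dispatched in the preceding discussion: the time-Lipschitz estimate $|(\tilde L_t - \tilde L_s)f| \leq \omega_1 (1+|x|+|y|)|\na_y f|\,|t-s|$ combined with the choice of the subdivision step $< \omega_2/(2\omega_1\omega_4)$ guarantees that $\tilde L_t R_{\varepsilon_{t_n}} \leq 0$ outside $\mathcal K$, and the sign of the $\Gamma$-correction in the chain rule for $\sqrt{\cdot}$ is non-positive, so the Lyapunov inequality is preserved. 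Thus there is no new analytic difficulty in the present lemma: it is a pure assembly of the building blocks constructed above.
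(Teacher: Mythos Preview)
Your proof is correct and follows essentially the same route as the paper: apply the diffusion chain rule to $l(V_n-p)$, split into the region where $l$ is constant (trivial) and the region $\{V_n \geq p-\pi/\kappa\}$ where $\tilde L_t V_n \leq 0$ so that the drift term $l'(V_n-p)\tilde L_t V_n$ is non-negative, and then bound the second-order term using $l''\geq -\tfrac{\kappa^2}{4\pi}l$. Your explicit identification of $\Gamma_{\tilde L_t,(\cdot)^2}(f)=3|\na_y f|^2$ and of the chain-rule coefficient for $V_n=\sqrt{1+R_{\varepsilon_{t_n}}-\min R_{\varepsilon_{t_n}}}$ is in fact slightly more careful than the paper's own summary.
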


\subsection{End of the proof of Proposition \ref{PropH}}

Bringing together Lemma \ref{LemKtilde},\ref{LemK} and \ref{Lemeta} and the computations of Section \ref{SectionGamma}, we get:

\begin{lem}\label{LemIH}
There exist $\omega_5,\omega_6$ (depending only on $T$) such that for all $n\in  \llbracket 0, N-1\rrbracket $, $\kappa>0$,  $t\in[t_n,t_{n+1}]$ and  $p \geq p_0$, writing $\eta = \eta_{\kappa,p,n}$,
\begin{eqnarray*}
\partial_t  I_{\eta} (t) & \leq &  \omega_5 \po I_{\eta}(t) + 1 \pf \\
\\
\partial_t  H_{\eta} (t) & \leq & \omega_6\kappa^2\po \frac1e + H_\eta(t)\pf - I_\eta(t) + |\varepsilon_t'|\xi(\varepsilon_t)\po H_\eta(t) + 1 + \mathbb E(X_t^2 + Y_t^2)\pf .
\end{eqnarray*}
\end{lem}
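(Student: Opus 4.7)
The plan is to combine the three preceding lemmas by splitting the time derivative of $\int\eta\,\Phi_i(h_t)\dd\mu_{\varepsilon_t}$ into a temperature-drift piece and a frozen-temperature semigroup piece, and then to dispose of the residual $\Gamma$-terms through the Gamma-calculus bounds of Corollary~\ref{CorGamma}. On the compact interval $[0,T]$ all auxiliary quantities remain uniformly controlled, so that both inequalities should follow essentially by mechanical concatenation of the preceding lemmas.

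More concretely, for $i\in\{0,1,2\}$, the chain rule applied to the functional $(\varepsilon,m)\mapsto \int\eta\,\Phi_i(m/\mu_\varepsilon)\dd\mu_\varepsilon$ together with \eqref{EqEvolm} gives
\[\partial_t\po\int\eta\,\Phi_i(h_t)\dd\mu_{\varepsilon_t}\pf \,=\, \varepsilon_t'\left.\partial_\varepsilon\right|_{\varepsilon=\varepsilon_t}\!\po\int\eta\,\Phi_i(m_t/\mu_\varepsilon)\dd\mu_\varepsilon\pf \,+\, \left.\partial_s\right|_{s=0}\!\po\int\eta\,\Phi_i\po e^{sL_{\varepsilon_t}^*}h_t\pf\dd\mu_{\varepsilon_t}\pf\]
and the analogous identity for $\gamma(\varepsilon_t)\Phi_0(h_t)$, the contribution of $\gamma'(\varepsilon_t)$ being grouped into the first term. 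Lemma~\ref{LemKtilde} bounds this first piece by $|\varepsilon_t'|\xi(\varepsilon_t)$ times $\int\eta\,\Phi_i(h_t)\dd\mu_{\varepsilon_t}$ plus a second moment of $m_t$, which is finite by Proposition~\ref{PropMoment}. As for the semigroup piece, Lemma~\ref{Lemeta} supplies precisely the condition $(2L^*_{\varepsilon_t}+L_{\varepsilon_t})\eta\geq -c\,\eta$ required by Lemma~\ref{LemK}, with $c=\tfrac{3\kappa^2}{4\pi}\|\na_y V_n\|_\infty^2$; since $V_n$ grows like $\sqrt{1+U(x)+|y|^2}$, its $y$-gradient is uniformly bounded in $n$, hence $c\leq\omega_6\kappa^2$ for some $\omega_6=\omega_6(T)$. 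Lemma~\ref{LemK} then yields an upper bound $\omega_6\kappa^2\po\tfrac1e+\int\eta\,\Phi_i(h_t)\dd\mu_{\varepsilon_t}\pf-2\int\eta\,\Gamma_{L^*_{\varepsilon_t},\Phi_i}(h_t)\dd\mu_{\varepsilon_t}$.

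It then remains to handle the residual $\Gamma$-terms via Corollary~\ref{CorGamma}, applied to $L^*_{\varepsilon_t}$ (the extra scalar factor $\sigma(\varepsilon_t)/\varepsilon_t$ in front of $\na_xU$ in $L^*_{\varepsilon_t}$ merely shifts the constants in the corollary by a factor uniformly bounded on $[0,T]$). For $\Phi_2$, the second bound of Corollary~\ref{CorGamma} gives $\Gamma_{L^*_{\varepsilon_t},\Phi_2}\geq -C(\varepsilon_t)\Phi_2$, so $-2\int\eta\,\Gamma_{L^*_{\varepsilon_t},\Phi_2}(h_t)\dd\mu_{\varepsilon_t}\leq 2C(\varepsilon_t)I_\eta(t)$; lumping $\omega_6\kappa^2$, $2C(\varepsilon_t)$, $|\varepsilon_t'|\xi(\varepsilon_t)$ and the bounded second moment of $m_t$ into a single constant $\omega_5=\omega_5(T)$ delivers the first inequality. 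For $H_\eta$, the linearity $\Gamma_{L^*,\Phi_1+\gamma\Phi_0}=\Gamma_{L^*,\Phi_1}+\gamma\Gamma_{L^*,\Phi_0}$ together with the first bound of Corollary~\ref{CorGamma} yields $\Gamma_{L^*_{\varepsilon_t},\Phi_1+\gamma(\varepsilon_t)\Phi_0}\geq\tfrac12\Phi_2$, so the residual term collapses to exactly $-I_\eta(t)$; this time the moment term $\E(X_t^2+Y_t^2)$ is kept explicit, because it will be controlled globally later by Proposition~\ref{PropMoment} rather than absorbed into a $T$-dependent constant.

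The main obstacle I expect is not conceptual but purely one of bookkeeping: one has to check that $\omega_5$ and $\omega_6$, although allowed to depend on $T$ and on $\kappa$, do not covertly depend on the partition index $n$ through the functions $V_n$. This will be ensured by the uniform-in-$n$ quadratic control on $R_{\varepsilon_{t_n}}$ supplied by Lemma~\ref{LemmeLyapunov1} (so that the $y$-gradient of $V_n=\sqrt{1+R_{\varepsilon_{t_n}}-\min R_{\varepsilon_{t_n}}}$ is uniformly bounded) together with the continuity on $[0,T]$ of $\varepsilon_t$, $\sigma(\varepsilon_t)$ and $\gamma(\varepsilon_t)$. No new estimate beyond Lemmas~\ref{LemKtilde}, \ref{LemK}, \ref{Lemeta} and Corollary~\ref{CorGamma} is needed.
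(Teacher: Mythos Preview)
Your proposal is correct and follows exactly the same route as the paper: split the time derivative via the chain rule, control the temperature-drift piece by Lemma~\ref{LemKtilde}, the frozen-temperature piece by Lemma~\ref{LemK} (with the hypothesis supplied by Lemma~\ref{Lemeta}), and close with the two estimates of Corollary~\ref{CorGamma}. The only minor discrepancy is that the paper, like you when you lump $\omega_6\kappa^2$ into $\omega_5$, effectively proves the $I_\eta$ bound only for a fixed $\kappa$ (it explicitly takes $\kappa=1$), which is all that is needed in the subsequent proof of Proposition~\ref{PropH}.
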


\begin{proof}
Let $\omega_6 = \frac{3}{4\pi} \underset{0\leq n \leq N-1}\max \| \na_y V_n\|_\infty$ and $\omega_7 = \underset{\varepsilon_0\geq \varepsilon\geq \varepsilon_T}\max \po \frac{\theta\po \varepsilon\pf}{\varepsilon}\|\na_x U \|_\infty + 1 + \frac{1}{\theta\theta\po \varepsilon \pf}\pf$. From Corollary \ref{CorGamma}, for all $t\in[0,T]$,
\[\Gamma_{L^*_{\varepsilon_t},\Phi_2} \geq  -\omega_7 \Phi_2.\]
Hence, from Lemma \ref{LemKtilde},\ref{LemK}, and \ref{Lemeta}, by taking $\kappa = 1$, $n\in\llbracket 0,N-1\rrbracket$ and $p\geq p_0$, we get
\begin{eqnarray*}
\partial_t I_{\eta}(t) & \leq & \omega_6 \po \frac1e + I_\eta(t)\pf + 2 \omega_7 I_\eta(t) +|\varepsilon_t'|\xi(\varepsilon_t)\po I_\eta(t) + 1 + \mathbb E\po X_t^2 + Y_t^2\pf\pf\\
& \leq & \omega_5 (I_\eta(t) + 1)
\end{eqnarray*}
for some $\omega_5$ (the moments are uniformly bounded on $[0,T]$ according to Proposition \ref{PropMoment}). The case of $H_\eta$ is exactly the same.
\end{proof}

\begin{proof}[of Proposition \ref{PropH}]
From Lemma \ref{LemIH}, for all $n\in  \llbracket 0, N-1\rrbracket $, for all $t\in[t_n,t_{n+1}]$, and for all $p$ large enough,
\begin{eqnarray*}
I_{\eta_{1,p,n}}(t) + 1 & \leq & e^{\omega_5 (t-t_n)} \po I_{\eta_{1,p,n}}(t_n)+1\pf.
\end{eqnarray*}
When $p$ goes to infinity, the Fatou Lemma yields $I(t) + 1  \leq  e^{\omega_5 (t-t_n)} \po I(t_n)+1\pf$, and thus, $I(t) + 1  \leq  e^{\omega_5 t} \po I(0)+1\pf$ for all $t\in[0,T]$. Finally, $I(0) < \infty$ since
\begin{eqnarray*}
|\na \sqrt{h_0}|^2 \mu_{\varepsilon_0} & = & \left|\na \sqrt{h_0 \mu_{\varepsilon_0}} - \sqrt{h_0}\na \sqrt{\mu_{\varepsilon_0}}\right|^2\\
& \leq & 2|\na \sqrt{m_0}|^2 + 2 h_0 |\na \sqrt{\mu_{\varepsilon_0}}|^2\\
& \leq & 2|\na \sqrt{m_0}|^2 + \frac{1}{2} m_0 |\na \ln \mu_{\varepsilon_0}|^2,
\end{eqnarray*}
which is integrable according to Assumption \ref{Hypo}.\ref{HypoLoiInit}.

\bigskip

Note that, by the log-Sobolev inequality \eqref{EqLogSob}, this implies that $t\mapsto H_t$ is also finite and locally bounded. Integrating the second part of Lemma \ref{LemIH} between times $s,t\in[t_n,t_{n+1}]$ yields, for $p$ large enough and writing $\eta = \eta_{\kappa,p,n},$
\begin{eqnarray*}
H_{\eta}(t) - H_{\eta}(s) & \leq & \int_s^t \omega_6\kappa^2\po \frac1e + H_\eta(u)\pf - I_\eta(u) + |\varepsilon_u'|\xi(\varepsilon_u)\po H_\eta(u) + \mathbb E(X_u^2+ Y_u^2)\pf \dd u.
\end{eqnarray*}
Again, we let $p$ go to infinity and use the Fatou Lemma to get
\begin{eqnarray*}
H(t) - H(s) & \leq & \int_s^t \omega_6\kappa^2\po \frac1e + H(u)\pf - I(u) + |\varepsilon_u'|\xi(\varepsilon_u)\po H(u) + \mathbb E(X_t^u + Y_t^u)\pf \dd u.
\end{eqnarray*}
Then, let $\kappa$ go to 0: the result does not depend on $n$, or even on $T$ any more, and thus it is true for any $t,s>0$.
\end{proof}

\section{Conclusion}\label{SectionConclusion}

We keep here the notations of the previous section. As announced in Section \ref{SubSectionSketch2}, the differential inequality satisfied by the distorted entropy implies it goes to zero:

\begin{lem}\label{LemHInequaDiff}
Under Assumption \ref{Hypo}, for  any $\alpha>0$,  there exists $B>0$ such that
\begin{eqnarray*}
H(t) & \leqslant & B \po \frac1t\pf^{1 -\frac{E_*}{E} - \alpha}.
\end{eqnarray*}
\end{lem}

\begin{proof} 
The log-Sobolev inequality \eqref{EqLogSob} implies $ I(t)\geq \xi_2(\varepsilon_t) e^{-\frac{E_*}{\varepsilon_t}} H(t)$ for all $t>0$, where $\xi_2$ is a sub-exponential function, so that Proposition \ref{PropH} becomes
\[H'(t) \leq - \xi_2(\varepsilon_t) e^{-\frac{E_*}{\varepsilon_t}} H(t)  + |\varepsilon_t'|\xi(\varepsilon_t)\po H(t) + 1+ \mathbb E(X_t^2 + Y_t^2)\pf .\]
Since $\xi$ is sub-exponential and $\varepsilon_t \ln t$ is bounded below by a positive constant for $t$ large enough (from the slow cooling assumption), then $\forall \alpha >0$,
\[t^{-\alpha} \xi\po \varepsilon_t \pf \ = \ e^{-\ln t\po \alpha - \frac{1}{\varepsilon_t \ln t} \po \varepsilon_t \ln \xi\po \varepsilon_t\pf\pf\pf} \ \underset{t\rightarrow+\infty}\longrightarrow\ 0,\]
and similarly for $\xi_2$. Moreover,  for $t$ large enough, $|\varepsilon_t'| \leq \frac{ \varepsilon_0^2}{Dt}$ which, together with Proposition \ref{PropMoment}, means that, for any $\alpha>0$, 
\[\beta(t)\ :=\ | \varepsilon'_t| \xi(\varepsilon_t) \po 1+ \mathbb E(X_t^2 + Y_t^2)\pf \ = \ \underset{t\rightarrow+\infty}o(t^{-1+\alpha}).\]
Similarly, for any $\alpha>0$, there exists $c>0$ such that, for $t$ large enough,
\[b(t)\ := \ \xi_2(\varepsilon_t) e^{-\frac{E_*}{\varepsilon_t}} \ \geqslant \  c \po \frac{1}{t}\pf^{\frac{E_*}{E}+\alpha}, \]
and $\beta = o(b)$ as $t\rightarrow \infty$. Hence, since $H>0$, $(b - \beta) H \geqslant \frac12 b H$ for large times, which means there exist  $t_0,c_1,c_2$ such that for  all $t\geqslant t_0$,
\[H'(t) \ \leqslant \ - c_1 \po \frac{1}{t}\pf^{\frac{E_*}{E}+\alpha} H(t) + c_2 \po \frac1t\pf^{1-\alpha}.\]

As proved in \cite[Lemma 6]{Miclo92}, this implies that $H$ goes to zero. For the sake of completeness, and to precise a speed of convergence toward zero, we recall here this short argument: if $\alpha \leqslant \frac12\po 1-\frac{E_*}{E}\pf$, for $t_0$ large enough and for all $t\geq s \geq t_0$, 
\begin{eqnarray*}
\partial_s \po H(s) - \frac{2c_2}{c_1} \po \frac{1}{s}\pf^{1 -\frac{E_*}{E} -2 \alpha} \pf  & \leqslant & - c_1 \po \frac{1}{s}\pf^{\frac{E_*}{E} + \alpha}\po H(s) - \frac{2c_2}{c_1} \po \frac{1}{s}\pf^{1 -\frac{E_*}{E} -2 \alpha} \pf,
\end{eqnarray*}
or in other words
\begin{eqnarray*}
 \partial_s \po \po H(s) - \frac{2 c_2}{c_1} \po \frac{1}{s}\pf^{1 -\frac{E_*}{E} -2 \alpha}\pf e^{c_1\int_{t_0}^s\po \frac{1}{u}\pf^{\frac{E_*}{E} + \alpha }\dd u }\pf & \leqslant & 0.
 \end{eqnarray*}
As a consequence,
\begin{eqnarray*} 
H(t) & \leq & \frac{2c_2}{c_1} \po \frac{1}{t}\pf^{1 -\frac{E_*}{E} -2 \alpha} + H(t_0) e^{- \frac{c_1}{v}(t^v - t_0^v)},
\end{eqnarray*}
where $v = 1 - \frac{E_*}{E} - \alpha >0$, which concludes.
\end{proof}

 We can now conclude the proof of our main result:
 
 \begin{proof}[Proof of Theorem \ref{TheoPrincipal}]
  For all $t\geq0$, let $(\tilde X_t,\tilde Y_t)$ be a random variable with law $\mu_{\varepsilon_t}$. Then for all $\delta>0$,
\begin{eqnarray*}
\mathbb P\po U(X_t) > \min U + \delta \pf & \leqslant & \mathbb P\po U(\tilde X_t) > \min U + \delta \pf + \| h_t - 1 \|_{L^1(\mu_{\varepsilon_t})}.
\end{eqnarray*}
 By Pinsker's inequality,
\[\| h_t - 1 \|_{L^1(\mu_{\varepsilon_t})} \ \leqslant\ \sqrt{2 \text{Ent}_{\mu_{\varepsilon_t}}(h_t)} \leq \sqrt{2 H(t)}.\]
From Lemmas \ref{LemHInequaDiff} and \ref{LemMassConcentration}, for all $\delta,\alpha>0$, there exists $c>0$ such that
\begin{eqnarray*}
\mathbb P\po U(X_t) > \min U + \delta \pf & \leqslant & c\po \po \frac1t\pf^{\frac{1 -\frac{E_*}{E} - \alpha}2} + e^{-\frac{\delta-\alpha}{\varepsilon_t}}\pf.
\end{eqnarray*}
The rhs goes to zero as $t$ goes to infinity, and is of order $\po\frac1 t\pf^{\frac{\min\po\delta,\frac{E - E_*}{2}\pf-\alpha}{E}}$ if $\partial_t \po \frac{1}{\varepsilon_t}\pf = \frac{1}{Et} $ for $t$ large enough.
 \end{proof}

\subsection*{Acknowledgments}

The author would like to thank Laurent Miclo, who initiated this work, for numerous fruitful discussions. This work has been supported by ANR STAB.

\bibliographystyle{plain}
\bibliography{biblio}

\end{document}